\theoremstyle{plain}
\theoremstyle{definition}
\theoremstyle{remark}
\numberwithin{equation}{section}
\theoremstyle{plain} \declaretheorem[numberwithin = section, name = Theorem,
 refname = {Theorem}, Refname = {Theorem}]{thm}
\theoremstyle{plain} \declaretheorem[numberlike = thm, name = Proposition,
 refname = {Proposition}, Refname = {Proposition}]{prop}
\theoremstyle{plain} \declaretheorem[numberlike = thm, name = Lemma,
refname = {Lemma}, Refname = {Lemma}]{lem}
\theoremstyle{plain} 
\theoremstyle{definition} 
\theoremstyle{definition} \declaretheorem[numberlike = thm, name = Remark,
refname = {Remark}, Refname = {Remark}]{rem}
\theoremstyle{plain} \declaretheorem[numberlike = thm, name = Corollary,
refname = {Corollary}, Refname = {Corollary}]{cor}
\DeclareMathOperator {\R}{\mathbb{R}}
\DeclareMathOperator {\BV}{BV}
\DeclareMathOperator{\weak*}{\begin{picture}(10,4)
							\put(0,-2){$\rightharpoonup$}
							\put(3,3){$\ast$}
							\end{picture}}
\DeclareMathOperator {\h}{\mathcal{H}}
\DeclareMathOperator{\gammaliminf}{\Gamma\text{-}\liminf}
\DeclareMathOperator{\conv}{\text{conv}}
  \def\Gamma{Gamma}
  \def\({}
  \def\){}
\title[Nonlocal phase transitions]
{$\Gamma$-convergence for nonlocal phase transitions involving the $H^{1/2}$ norm}
\author[T. Heilmann] {T. Heilmann}
\address[Tim Heilmann]{
	Zentrum Mathematik - M7, Technische Universit\"at M\"unchen, 
	Boltzmannstrasse 3, 85748 Garching, Germany	}
\email{heilmant@ma.tum.de}
\begin{document}
	
\begin{abstract}
We study functionals
\begin{equation*}
	F_\varepsilon (u) := \lambda_\varepsilon \int_\Omega W(u)  \, dx +
		\varepsilon \|u\|_{H^{1/2}}^2
\end{equation*}
for a double well potential $W$ and the Gagliardo seminorm $\|\cdot\|_{H^{1/2}}$ when
$\varepsilon \ln(\lambda_\varepsilon) \rightarrow k$ as $\varepsilon \rightarrow 0^+$
and show compactness in the space of $BV$ functions on $\Omega$ and the $\Gamma$-convergence
to the classical surface tension functional.
\vskip5pt
\noindent
\textsc{Keywords}: Phase transitions; $\Gamma$-convergence. 
\vskip5pt
\noindent
\textsc{AMS subject classifications:}  
49J10
49J45
\end{abstract}

\maketitle

\section {Introduction}
In this article we consider a family of energy functionals
\begin{equation} \label{introduction_temp1}
	F_\varepsilon (u) := \lambda_\varepsilon \int_\Omega W(u)  \, dx +
		\varepsilon \int_{\Omega \times \Omega} \frac{(u(y) - u(x))^2}
		{|y - x|^{N+1}} \, d(y,x)
\end{equation}
defined on the space of $L^1$- functions on a domain $\Omega \subset \R^N$
and study their compactness and $\Gamma$-convergence properties as $\varepsilon \rightarrow 0^+$
and $\lambda_\varepsilon \sim e^{1/\varepsilon}$.
Here
$W$ is a double-well potential and we can see $F_\varepsilon$ as a nonlocal version of the classical
Modica-Mortola energy functional studied in \cite{m, mm}, the squared $L^2$- norm of the gradient being
replaced by the squared Gagliardo seminorm $\|u\|_{H^{1/2}}^2$.
Nonlocal Modica-Mortola type energies have been studied in \cite{ab,ab2}. There, the gradient term
is replaced by a convolution term and the energy functionals have the form
\begin{equation}\label{introduction_temp2}
	\frac{1}{\varepsilon} \int_\Omega W(u)  \, dx +
		\varepsilon \int_{\Omega \times \Omega} J_\varepsilon(y-x) \frac{(u(y) - u(x))^2}
		{\varepsilon^2} \, d(y,x) \,.
\end{equation}
If the convolution  kernels $J_\varepsilon(\cdot) = \varepsilon^{-n} J(\cdot / \varepsilon)$
satisfy $\int_{\R^N} J(h) (|h| \wedge |h|^2) \,dh < \infty$, in \cite{ab} it is shown that,
as $\varepsilon \rightarrow 0^+$, the
limit energy functional is finite on the set of those BV functions on $\Omega$ that
attain only the values of the wells of $W$
and is given via a cell formula depending on $J$ resulting in a possibly anisotropic perimeter functional.
Let us note that the convolution kernel $h \mapsto h^{-(N+1)}$ in (\ref{introduction_temp1})
does not satisfy the above boundedness assumption and it can be shown that
the cell formula would give the value $+ \infty$. Therefore, it is
reasonable to use another scaling in (\ref{introduction_temp1}), and we will consider the case
that $\lambda_\varepsilon \sim e^{1 / \varepsilon}$. With this choice we will obtain as $\Gamma$-limit
still an isotropic perimeter type functional. This result has already been shown in \cite{sv1}
with a proof that uses an energy lower bound from \cite{psv} to prove a compactness statement for
sequences of equibounded energy, while it exploits compactness 
and the existence of minimizers for an associated optimal profile problem from \cite{sv2}
to show $\Gamma$-convergence.
Our aim is to give a simplified self-contaied proof which may be easier to adapt to the case
of more general kernels. We would like to remark that a corresponding
compactness and $\Gamma$-convergence result for the space dimension $N = 1$ is shown in
\cite{abs}. The latter relies on a rearrangement inequality, a strategy that we were
not able use in higher dimension.

The plan of this article is as follows: In the following parts of the introduction we
summarize the notation that we use, state our main result
and give a short summary of the proof strategy. 
The proof of the main theorem is contained in section 2. More specifically, in subsection 2.1 we give
estimates for the nonlocal part of the energy functional and
in subsection 2.2 lower and upper bounds for the energy functional in special situations.
Subsection 2.3 contains the proof of our compactness result and in
subsection 2.4 we show the $\Gamma$-convergence result.

For the theory of BV functions resp. $\Gamma$-convergence used in this article
we refer to the books \cite{afp} resp. \cite{dm}.

\subsection{Notation}
We consider sets in $\R^N$ for $N \geq 2$ equipped with its standard basis $e_1, \dots, e_N$.
We write $\conv(S)$ for the convex hull of a set $S \subset \R^N$ and
$S^C := \R^N\setminus  S$ for its complement. We write $S \Delta T$ for the symmetric difference of two
 sets $S$ and $T$; sets in $\R^{N-1}$ are identified with the corresponding subsets in the hyperplane 
$\{x_N = 0\} \subset \R^N$.
For a point $x = (x_1, \dots, x_N) \in \R^N$ we write $x' := (x_1,\dots,x_{N-1})$ and also $x = (x', x_N)$.
Given a measurable set $S \subset \R^N$ and $x' \in \R^{N-1}$ 
we write $S_{x'} := \{x'\} \times \R \, \cap \, S$
for its section in direction $e_N$ at $x'$.
Moreover, we write $-S := \{x \in \R^N \, : \, -x \in S \}$ and,
given in addition $x \in \R^N$, $S + x := \{ y + x \,:\, y \in S \}$.
We write $Q_R(x) := x + [-R/2, R/2]^N$ for the $N$-cube with side length $R$ centered at $x \in \R^N$
and $Q_R'(x') := x' + [-R/2, R/2]^{N-1}$ for the corresponding $N-1$-cube.
Similarly we write $B_R(x) := \{y \in \R^N \,:\, |x-y| < R\}$ and
$B_R'(x') := \{y' \in \R^{N-1} \,:\, |x'-y'| < R\}$.
We write $C$ for constants that may change their value from line to line.
In order to shorten the notation, we write integrals of functions $f$ defined on the product of two sets
$A,B$ as
$\int_{A \times B} f(x,y) \,d(x,y) := \int_A \int_B f(x,y) \,dy \,dx$ and write for the level sets of $f$
$\{ f(x,y) = a \} := \left\{ (x,y) \in A \times B \,:\, f(x,y) = a  \right\}$.
For a function of bounded variation $u \in \BV (\Omega)$ we write $S_u$ for its approximate jump set,
as defined e.g. in \cite{afp}.

\subsection{The energy functional, the main theorem and proof ideas}
We fix a bounded regular domain $\Omega \subset \R^N$ and consider the energy
$F_\varepsilon : L^1(\Omega) \rightarrow [0,+ \infty]$ defined as
\begin{align*}
	&F_\varepsilon (u_\varepsilon) := \lambda_\varepsilon \int_\Omega W(u_\varepsilon)  \, dx +
		\varepsilon \int_{\Omega \times \Omega} \frac{(u_\varepsilon(y) - u_\varepsilon(x))^2}
		{|y - x|^{N+1}} \, d(y,x) \,,
\end{align*}
where $\varepsilon > 0$ and $\lambda_\varepsilon$ is a parameter such that it exists
\begin{equation*}
	k: = \lim_{\varepsilon \rightarrow 0^+}\varepsilon \log(\lambda_\varepsilon) \in (0,\infty)\,.
\end{equation*}
We assume that the potential $W : \R \rightarrow [0, +\infty)$ satisfies for some $\alpha, \beta \in \R$,
$\alpha < \beta$
\begin{align*}
	&W(x) = 0 \, \Leftrightarrow \, x \in \{\alpha, \beta\} \\
	&W \text{ has at least linear growth at } \pm \infty\,,
\end{align*}
define the dimensional constant
\begin{equation*}
	\omega_{n-1} :=\h^{N-1} \left( B_1'(0)\right)
\end{equation*}
and the limit energy functional
\begin{equation*}
	F: L^1(\Omega) \rightarrow [0,\infty] \,,\, u \mapsto \left\{
		\begin{array}{lr}
			2(\beta - \alpha)^2\omega_{n-1} k \h^{N-1}(S_u) \,, &u \in \BV(\Omega, \{ \alpha,\beta \}) \\
			+\infty, &\text{otherwise.}
		\end{array} \right.
\end{equation*}
Recall that for a function $u \in \BV(\Omega, \{ \alpha,\beta \})$, $S_u$ can be seen
equivalently as the reduced boundary of the set $\{ u = \beta \}$.

Our main theorem is the following compactness and $\Gamma$-convergence result.
The proof of its first part is given in Proposition \ref{compactness}, while its second part is an immediate
consequence of Proposition \ref{liminf} and Proposition \ref{recovery}.
\begin{thm}
If $F_\varepsilon, F$ and $W$ are as defined above, the following statements hold:
	\newline
		Compactness: Given sequences $(u_{\varepsilon_h})_h$ in $L^1(\Omega)$
		and $(\varepsilon_h)_h$ in $\R$ such that $\varepsilon_h \rightarrow 0^+$ 
		and $F_{\varepsilon_h}(u_{\varepsilon_h})$ is uniformly bounded,
		then $(u_{\varepsilon_h})_h$ is relatively compact in $L^1(\Omega)$ and each of its cluster points
		belongs to $ \BV \left( \Omega, \{\alpha,\beta\} \right)$.
		\newline
	$\Gamma$-convergence: The functionals $F_\varepsilon$ $\Gamma$-converge in
		$L^1(\Omega)$ to the functional $F$ as $\varepsilon \rightarrow 0^+$.
\end{thm}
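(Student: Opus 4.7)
My plan is to carry out the three standard ingredients ($\Gamma$-liminf, recovery, compactness) around a single reference calculation that pins down the logarithmic coefficient produced by the kernel $|y-x|^{-(N+1)}$ at a smooth interface. I would take a one-dimensional profile $u_\delta(x) := \alpha + (\beta-\alpha)\psi(x_N/\delta)$ with $\psi$ a monotone transition on $[-1,1]$, and consider an $N$-cube $Q_R$ centred on $\{x_N = 0\}$. Writing $h' := y'-x'$ and using the radial identity
\begin{equation*}
	\int_{\R^{N-1}} \bigl(|h'|^2 + a^2\bigr)^{-(N+1)/2} \, dh' = \frac{\omega_{N-1}}{a^2}
\end{equation*}
(which identifies exactly $\omega_{N-1} = \pi^{(N-1)/2}/\Gamma((N+1)/2) = \h^{N-1}(B_1'(0))$), the double integral reduces to a one-dimensional logarithmically divergent integral cut off at $\delta$, yielding
\begin{equation*}
	\int_{Q_R \times Q_R} \frac{(u_\delta(y)-u_\delta(x))^2}{|y-x|^{N+1}} \, d(x,y) = 2(\beta-\alpha)^2 \omega_{N-1} R^{N-1} \log(R/\delta) + O(R^{N-1}),
\end{equation*}
with the factor $2$ coming from the two symmetric half-space contributions. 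Subsection~2.2 should then upgrade this model equality to a robust lower bound valid whenever $u$ is only close in $L^1$ to such a sharp-interface configuration.

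\textbf{Step 2 (Compactness and $\Gamma$-liminf).} Combining Step~1 with the elementary bound $\int_{Q_R} W(u_\varepsilon) \geq c\delta R^{N-1}$, valid whenever $u_\varepsilon$ stays bounded away from $\{\alpha,\beta\}$ on a layer of thickness $\delta$, the localized energy is at least $2(\beta-\alpha)^2 \omega_{N-1} R^{N-1}\, \varepsilon \log(R/\delta) + c\lambda_\varepsilon \delta R^{N-1}$. Minimizing in $\delta$ gives $\delta^{*} \sim \varepsilon/\lambda_\varepsilon$ and an asymptotic minimum $2(\beta-\alpha)^2 \omega_{N-1} R^{N-1}\, \varepsilon \log\lambda_\varepsilon \to 2(\beta-\alpha)^2 \omega_{N-1} k R^{N-1}$. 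Applied directionally (i.e.\ with cubes oriented along coordinate axes), this bound controls a discrete total variation of $u_\varepsilon$; paired with the $L^1$ convergence $W(u_\varepsilon)\to 0$ forced by the potential, it yields compactness and that every cluster point lies in $\BV(\Omega,\{\alpha,\beta\})$. For the $\Gamma$-liminf, given $u_\varepsilon \to u$ with $u \in \BV(\Omega,\{\alpha,\beta\})$, I would cover $\h^{N-1}$-almost all of $S_u$ by small disjoint cubes aligned with the approximate normal $\nu_u$, apply the localized lower bound on each, and let the mesh size tend to $0$.

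\textbf{Step 3 (Recovery).} By a standard density argument in $\BV(\Omega,\{\alpha,\beta\})$ together with lower semicontinuity of $F$, it suffices to treat $u = \alpha \chi_{E^c} + \beta \chi_E$ with $\partial E$ of class $C^2$. I would define $u_\varepsilon(x) := \Psi(d_E(x)/\delta_\varepsilon)$, where $d_E$ is the signed distance to $\partial E$, $\Psi : \R \to [\alpha,\beta]$ is a fixed smooth monotone transition, and $\delta_\varepsilon := \varepsilon/\lambda_\varepsilon$. Using tubular coordinates near $\partial E$ and the coarea formula, the potential part is $\lambda_\varepsilon \delta_\varepsilon \h^{N-1}(\partial E) \int W\circ\Psi + o(1) = \varepsilon C \h^{N-1}(\partial E) + o(1) \to 0$; the nonlocal part factorizes at leading order via Step~1 as $\varepsilon \cdot 2(\beta-\alpha)^2 \omega_{N-1} \h^{N-1}(\partial E) \log(1/\delta_\varepsilon)$, and since $\varepsilon \log(1/\delta_\varepsilon) = \varepsilon \log\lambda_\varepsilon - \varepsilon \log\varepsilon \to k$, the total converges to $F(u)$.

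\textbf{Main obstacle.} What makes the proof non-trivial, and distinguishes it sharply from the local Modica-Mortola case, is the long-range structure of the double integral: contributions come from arbitrarily far-apart pairs $(x,y)$, so a naive localization to cubes near $S_u$ both double-counts inter-cube cross terms and risks missing the sharp logarithmic coefficient. I expect the bulk of the technical work (subsection~2.1) to consist of a careful decomposition of $\Omega \times \Omega$ paired with a tail estimate such as $\int_{|y-x|>r} |y-x|^{-(N+1)} \, dy \leq C r^{-1}$, controlled by the $L^\infty$ bound on $u_\varepsilon$ that follows from the at-least-linear growth of $W$. Once this splitting is rigorous, both compactness and liminf reduce cleanly to the model calculation of Step~1, and the recovery sequence computation becomes a routine tubular-coordinate expansion.
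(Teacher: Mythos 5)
The overall architecture you describe — a model computation producing the coefficient $2(\beta-\alpha)^2\omega_{N-1}k$, a localized lower bound fed into a blow-up/covering argument for compactness and the liminf, and an explicit optimal-profile recovery sequence with transition thickness $\varepsilon/\lambda_\varepsilon$ — matches the paper, and your heuristic optimization in $\delta$ correctly predicts both the cutoff scale and the limit constant. The recovery step (Step 3) is sound in outline; the paper does it with polyhedral sets and an explicit cylinder computation rather than a $C^2$ tubular neighbourhood, but either route works.

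The genuine gap is the sentence ``Subsection 2.2 should then upgrade this model equality to a robust lower bound valid whenever $u$ is only close in $L^1$ to such a sharp-interface configuration.'' That upgrade is the heart of the proof, and nothing in your proposal supplies a mechanism for it. The difficulty is not the one you name in your final paragraph (tail estimates and double-counting of inter-cube cross terms — those are routine for this kernel); it is that for an arbitrary competitor the sets $A_\varepsilon=\{u_\varepsilon<\alpha+\delta\}$ and $B_\varepsilon=\{u_\varepsilon>\beta-\delta\}$ need not be separated by anything resembling a flat layer of thickness $\delta$: all you know is that $A_\varepsilon$ has high density near the top of a cube, $B_\varepsilon$ near the bottom, and the residual set has measure $O(1/\lambda_\varepsilon)$. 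Your claimed dichotomy — either $u_\varepsilon$ stays away from the wells on a layer of thickness $\delta$ (making the potential term large) or the nonlocal term is at least $\varepsilon\log(R/\delta)$ — is precisely what must be proved, and it does not follow from the model computation. The paper handles this with a dedicated reflection/chain argument (Lemma 2.7): along pairs of antipodal vertical lines one follows a zigzag chain of points at mutual vertical distance $2v$ and shows that if no consecutive pair contributes, then either some point of the chain lies in the small residual set or the top of the cylinder is occupied by $B$, contradicting the density hypothesis; this converts the interaction of far-apart horizontal slabs into that of two slabs at distance $\sim 1/(\varepsilon\lambda_\varepsilon)$. Note also that the paper explicitly remarks that the one-dimensional rearrangement strategy of Alberti--Bouchitt\'e--Seppecher does not carry over to $N\geq 2$, so this step cannot be dismissed as standard. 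Without an argument of this type, Step 2 (both compactness and the liminf inequality) does not close.
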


\begin{rem}
Note that the theorem holds true also for $N=1$ and it could be shown with a proof as the one given below,
the only difference being the calculation of $G$ in Lemma \ref{planeplane}
and Lemma \ref{correction_cylinderbound1}.
\end{rem}

In the following, we consider the energy
\begin {equation*}
	G(A,B) = \int_{A \times B} \frac{1}{|y-x|^{N+1}}\, d(y,x) \,,
\end{equation*}
where $A,B \subset \R^N$ are two disjoint sets. Heuristically, in the energy $F_\varepsilon (u_\varepsilon)$,
the first integral forces $u_\varepsilon$ to take values in $\{ \alpha,\beta \}$
as $\varepsilon \rightarrow 0^+$, and the second integral will be approximated by 
$G\left( \{ u_\varepsilon \approx \alpha \}, \{ u_\varepsilon \approx \beta \} \right)$.
Note that  it will even be only this second integral which contributes to the energy of a recovery sequence
(see the proof of Lemma \ref{recoverycube}).

Let us sketch the proof of the main theorem:
In subsection \ref{temp10} we estimate $G$ from below for two coaxial cylinders  $S' \times(d/2,l/2)$ and
$S' \times (-l/2,-d/2)$ exploiting the symmetries of the kernel.

This lower bound will give a lower bound as $\varepsilon$ tends to zero for the energy
$F_\varepsilon(u_\varepsilon)$ by a constant factor multiplied with
the Hausdorff measure of $S'$, whenever
$A_\varepsilon := \{ x \,:\, u_\varepsilon(x) < \alpha + \delta \}$ is of the form
$S' \times (1/\lambda_\varepsilon, l/2)$ and
$B_\varepsilon := \{ x \,:\, u_\varepsilon(x) > \beta - \delta \}$ is of the form
$S' \times (-l/2,-1/\lambda_\varepsilon)$ (for some fixed $\delta > 0$).

Once it is shown that for a sequence $(u_\varepsilon)_\varepsilon$ we can replace inside little cubes
$A_\varepsilon$ and $B_\varepsilon$ with sets of the type $S' \times \pm(1/\lambda_\varepsilon, l/2)$
without increasing the energy, we will obtain the  $\liminf$-inequality, Proposition \ref{liminf},
by a standard blow-up argument. Similarly, the proof of the compactness statement,
Proposition \ref{compactness}, follows
from a lower bound for $F_\varepsilon(u_\varepsilon)$ on little cubes on which
the mass of both $A_\varepsilon$ and $B_\varepsilon$ is bounded away from zero. 
These energy estimate results will follow from Corollary \ref{correction_cylinderbound2}.
We refer to the beginning of subsection \ref{temp7} for its proof idea.

The $\limsup$-inequality will follow from a localization and standard approximation argument:
in Proposition \ref{recovery}, we will reduce the general case to
finding  a recovery sequence $(u_\varepsilon)$ on a cylinder which approximates $\alpha$ 
on the upper half and $\beta$ on the lower half of this cylinder, and calculating its energy.
The latter is done in Lemma \ref{recoverycube}.

\section{Proof of the main result}

\subsection{Estimates for the nonlocal energy}\label{temp10}
In this subsection, we consider an energy defined on two disjoint sets related to
the nonlocal part of our energy functional $F_\varepsilon$:
we define for measurable sets $A,B \subset \R^N$ disjoint and $S \subset \R^N$
\begin{equation*}
G(A,B,S) := \int_{A \cap S \, \times \, B \cap S} \frac{1}{|y-x|^{N+1}} \, d(y,x)
\end{equation*}
and
\begin{equation*}
G(A,B) := G(A,B,\R^N) \,.
\end{equation*}

In order to simplify computations, it is helpful to rewrite $G$ as an integration with respect to antipodal
points on families of spheres as follows:
\begin{rem}[Computation of G] \label{temp1}
We define $h: \R^{2N} \rightarrow \R^N$ as $h(x,y) := \frac{x+y}{2}$
and obtain by an application of the Coarea formula that
\begin{align*}
	G(A,B,S) &= 2^{N/2} \int_{\R^N}  \int_{ \{h(x,y) = a \} } \frac{\chi_{A \cap S} (x) \chi_{B \cap S} (y)}
		{|y-x|^{N+1}} \,d \h^N (y,x) \,da \\
	&=  2^{N/2} \int_{\conv \left(( A\cap S) \cup (B \cap S) \right)}  \int_{ \{h(x,y) = a \} }
		 \frac{ \chi_{A \cap S} (x) 
		\chi_{B \cap S} (y)}{|y-x|^{N+1}} \,d \h^N (y,x) \,da.
\end{align*}
In order to shorten the notation, we also write the inner integral in the case $S = \R^N$ as
\begin{equation*}
	H(A,B,a) := 2^{N/2} \int_{ \{ h(x,y) = a \} } \frac{\chi_{A} (x) \chi_{B} (y)}
		{|y-x|^{N+1}} \,d \h^N (y,x).
\end{equation*}
\end{rem}

\begin{lem}[Value of $H$ on a line for two fattened hyperplanes] \label{planeplane}
Given $a' \in \R^{n-1}$, $d,l > 0$ such that $d<l$ and the sets
\begin{equation*}
	A_\infty := (d/2, l/2) \times \R^{N-1} \quad \text{and} \quad B_\infty := (-l/2,-d/2) \times \R^{N-1} \,,
\end{equation*}
it holds that
\begin {equation*}
	\int_{\R} H(A_\infty, B_\infty, (a',a_N)) \, da_N = \omega_{n-1} \left( 2\ln((l+d)/4) - \ln(l/2) - \ln(d/2) \right) \,,
\end{equation*}
where $\omega_{n-1} = \h^{N-1}\left(B_1'(0) \right)$.
\end{lem}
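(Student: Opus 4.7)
The plan is to evaluate the integral by first rewriting $H$ as a bulk integral in the relative coordinate $z = y - x$. Parametrizing $h^{-1}(a)$ by $z \in \R^N$ via $x = a - z/2$, $y = a + z/2$, the tangent vectors $(-e_i/2, e_i/2)$ give an induced Gram matrix $I/2$, so $d\h^N = 2^{-N/2}\,dz$. This cancels the $2^{N/2}$ in the definition of $H$ and yields the clean formula
\begin{equation*}
H(A, B, a) = \int_{\R^N} \frac{\chi_A(a - z/2)\, \chi_B(a + z/2)}{|z|^{N+1}}\, dz.
\end{equation*}
By Fubini I then exchange $\int_\R da_N$ with $\int_{\R^N} dz$ and aim to perform the resulting $a_N$-integral first.

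Because both $A_\infty$ and $B_\infty$ have the full $\R^{N-1}$ as their transverse factor, the two indicators only constrain the $N$-th coordinate: $a_N - z_N/2 \in (d/2, l/2)$ and $a_N + z_N/2 \in (-l/2, -d/2)$. The inner $a_N$-integral therefore computes the length of the overlap of two intervals of equal length $(l-d)/2$ whose centers are separated by $|z_N + (d+l)/2|$. A short case analysis shows this overlap is a triangular \emph{tent} function $L(z_N)$ supported in $[-l, -d]$, given explicitly by $L(-s) = s - d$ for $s \in [d, (d+l)/2]$ and $L(-s) = l - s$ for $s \in [(d+l)/2, l]$.

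To finish, I integrate out $z' \in \R^{N-1}$ at fixed $z_N$ using the identity
\begin{equation*}
\int_{\R^{N-1}} \frac{dz'}{(|z'|^2 + z_N^2)^{(N+1)/2}} \;=\; \frac{\omega_{n-1}}{z_N^2},
\end{equation*}
which follows by rescaling $z' = |z_N| w$ together with the standard Beta-integral $\int_{\R^n}(1+|w|^2)^{-s}\,dw = \pi^{n/2}\Gamma(s-n/2)/\Gamma(s)$ applied at $s=(N+1)/2$, $n=N-1$, producing exactly the dimensional constant $\omega_{n-1}$. After substituting $s=-z_N$ the problem reduces to the elementary one-dimensional integral $\omega_{n-1}\int_d^l L(-s)/s^2\,ds$: splitting at $s = (d+l)/2$ and using the primitives $\ln s + d/s$ on the first piece and $-l/s - \ln s$ on the second, the non-logarithmic boundary contributions combine through the identity $(d+l)/\bigl((d+l)/2\bigr) = 2$ and cancel against the constant terms, leaving only the logarithmic combination claimed in the statement. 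The principal bookkeeping obstacle is in correctly tracking the coarea/Jacobian constants ($2^{N/2}$, the $(1/2)^N$ Gram factor, and the sphere identity $|S^{N-2}|/(N-1)=\omega_{n-1}$) and in pinning down the precise support $[-l,-d]$ and piecewise-linear shape of $L$; once this is done, the remaining calculations are routine.
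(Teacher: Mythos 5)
Your reduction of $H$ to the bulk integral $\int_{\R^N}\chi_A(a-z/2)\chi_B(a+z/2)|z|^{-N-1}\,dz$ is correct (the Gram matrix of $z\mapsto(a-z/2,a+z/2)$ is $I/2$, so $d\h^N=2^{-N/2}\,dz$ indeed cancels the prefactor $2^{N/2}$), and the rest checks out: Tonelli, the tent function $L$ supported on $[-l,-d]$ with the stated piecewise-linear form, and the identity $\int_{\R^{N-1}}(|z'|^2+z_N^2)^{-(N+1)/2}dz'=\omega_{n-1}/z_N^2$ via the Beta integral, which lands exactly on $\pi^{(N-1)/2}/\Gamma((N+1)/2)=\h^{N-1}(B_1'(0))$. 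This is a genuinely different route from the paper's: the paper first computes $H(A_\infty,B_\infty,(a',a_N))$ pointwise in $a_N$, passing to polar-type coordinates $(|x'-a'|,x_N-a_N)$ on the level set $\{h=a\}$ and obtaining the closed form $\frac{\omega_{n-1}}{2}\left(\frac1{d/2+a_N}-\frac1{l/2-a_N}\right)$ on $\left(0,\frac{l-d}{4}\right)$, and only afterwards integrates in $a_N$; you integrate out $a_N$ first at fixed $z$ and are left with a one-dimensional tent integral. Your version avoids the spherical coarea bookkeeping ($\h^{N-2}(\partial B_1'(0))/(N-1)=\omega_{n-1}$) and is closer to a convolution identity, so it would adapt more readily to other radial kernels; the paper's version has the advantage that the explicit pointwise formula for $H$ is reused later (e.g.\ in the proof of Lemma \ref{correction_cylinderbound1}).

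One arithmetic point you should not gloss over: carried out exactly as you describe, the tent integral gives $\omega_{n-1}\int_d^l L(-s)s^{-2}\,ds=\omega_{n-1}\left(2\ln\frac{l+d}{2}-\ln l-\ln d\right)$, equivalently $\omega_{n-1}\left(2\ln\frac{l+d}{4}-\ln\frac l2-\ln\frac d2\right)$ --- not the combination printed in the lemma, which differs from this by the additive constant $2\omega_{n-1}\ln 2$. This is not an error in your method: the paper's own proof contains the same slip between its last two displayed lines (since $d/2+\frac{l-d}{4}=\frac{l+d}{4}$, not $\frac{l+d}{2}$), and the discrepancy is harmless downstream because the lemma is only ever used multiplied by $\varepsilon$ with $d\sim 1/(\varepsilon\lambda_\varepsilon)\to0$, so an additive constant disappears in the limit. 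But as written, your final sentence asserts a cancellation that produces "the logarithmic combination claimed in the statement," which it does not; you should either record the corrected value or explicitly flag the typo in the statement.
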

\begin{proof}
Noting that that $H(A_\infty, B_\infty, (a',a_N)) = 0$ if $a_N > \frac{l/2 - d/2}{2}$,
we first compute $H(A_\infty,B_\infty, (a',a_N))$ for $a_N \in \left( 0, \frac{l/2 -d/2}{2} \right)$.
Setting
\begin{equation*}
	i_a: \R^N \times \R^N \rightarrow \R^2, (x,y) \mapsto (|x' -a'|, x_N - a_N) \,,
\end{equation*}
 for $(x,y) \in \R^N \times \R^N$ and $(u,v) \in \R^2$  it holds that $h(x,y) = (a',a_N)$ and $i_a(x,y) = (u,v)$
if and only if $x \in \partial B_u'(a') \times \{\ v + a_N \}$ and $y = (2a'-x', a_N-v)$.
It follows
\footnote{In the second equality we applied the Coarea formula
(note that if $h(x,y) = a$, then $i_a(x,y) = (|x'/2 -y'/2|, x_N/2 -y_N/2)$ such that the tangential Jacobian of
$i_a$ inside the level set $\{ h(x,y) = a \}$ is $1/2^{N/2}$), the third equality holds because both
$\chi_{A_\infty}$ and $\chi_{B_\infty}$ depend only on $x_N$ and the fourth inequality is
by a change to polar coordinates.
In the last equality we have used that $\omega_{n-1} = \h^{N-2}(\partial B_1'(0)) / (N-1)$.}
\begin{align*}
	H(A_\infty, B_\infty,(a',a_N)) &= 2^{N/2} \int_{ \{ h(x,y) = a \} } 
		\frac{\chi_{A_\infty} (x) \chi_{B_\infty} (y)}
		{|y-x|^{N+1}} \,d \h^N (y,x) \\
	&= 2^N \int_{\R^2} \int_{ \{ h(x,y) = a \} \cap \{i_a(x,y) = (u,v) \}} \frac{\chi_{A_\infty} (x)
		 \chi_{B_\infty} (y)}
		{|y-x|^{N+1}} \,d \h^{N-2} (y,x) \,d(u,v) \\
	&=2^N \int_{\R^2} \chi_{(d/2,l/2)} (a_N+v) \chi_{(-l/2,-d/2)}(a_N-v) \\
	&\quad\quad
		  \frac{1} {\left( 2|(u,v)|\right)^{N+1}} \h^{N-2}(\partial B_u'(a') \times \{ v + a_N \}) \,d(u,v) \\
	&= \int_{(0, \pi/2) \times(0,\infty)} \chi_{(d/2,l/2)}(r \sin \theta +a_N) \chi_{(-l/2,-d/2)}(a_N- r\sin \theta) \\
	&\quad \frac{1}{2 r^{N+1}} \h^{N-2} (\partial B_1'(0)) (r \cos \theta)^{N-2} r \, d(r,\theta) \\
	&= \frac{1}{2} \h^{N-2}(\partial B_1'(0)) \int_{(0,\pi/2) \times 
		\left( \frac{d/2 + a_N}{\sin \theta}, \frac{l/2 - a_N}{\sin \theta}\right)}
		\cos(\theta)^{N-2} \frac{1}{r^2} \,d(\theta,r) \\
	&= \frac{\omega_{n-1}}{2} \left( \frac{1}{d/2 + a_N} - \frac{1}{l/2 - a_N} \right) \,.
\end{align*}

The symmetry of $H$ implies that $H(A_\infty, B_\infty, (a',-a_N)) = H(A_\infty, B_\infty, (a',a_N))$ and
therefore we conclude:
\begin{align*}
	\int_{\R} H(A_\infty, B_\infty, (a',a_N)) \, da_N &= \omega_{n-1} \int_{ \left( 0, \frac{l-d}{4} \right) }
		 \frac{1}{d/2 + a_N} - \frac{1}{l/2 - a_N} \, da_N \\
	&= \omega_{n-1} \left( 2\ln((l+d)/4) - \ln(l/2) - \ln(d/2) \right) \,.
\end{align*}
\end{proof}

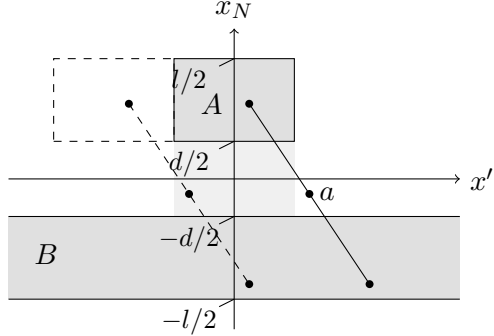
\begin{figure}
\begin{tikzpicture}
	\fill[fill = black!6] (-0.8,0.5) -- (0.8,0.5) -- (0.8,-0.5) -- (-0.8,-0.5) -- cycle;
	\filldraw[fill = black!12] (-0.8,0.5) -- (0.8,0.5) -- (0.8,1.6) -- (-0.8,1.6) -- cycle;
	\draw (-0.3, 1) node[anchor = center ]{$A$};
	\draw[dashed](-2.4,0.5) -- (-0.8,0.5) -- (-0.8,1.6) -- (-2.4,1.6) -- cycle;
	\fill[fill = black!12] (-3,-0.5) -- (3,-0.5) -- (3,-1.6) -- (-3,-1.6);
	\draw (-2.5, -1) node[anchor = center ]{$B$};
	\draw (-3,-0.5) -- (3,-0.5);
	\draw (-3,-1.6) -- (3,-1.6);
	\fill(0.2,1) circle [radius = 1.5pt];
	\fill(1,-0.2) circle [radius = 1.5pt];
	\draw (1,-0.2) node[anchor= west]{$a$};
	\fill(1.8,-1.4) circle [radius = 1.5pt];
	\draw(0.2,1) -- (1.8,-1.4);
	\fill(-1.4,1) circle [radius = 1.5pt];
	\fill(-0.6,-0.2) circle [radius = 1.5pt];
	\fill(0.2,-1.4) circle [radius = 1.5pt];
	\draw[dashed](-1.4,1) -- (0.2,-1.4);
	\draw[->](-3,0) --(3,0) node[anchor=west]{$x'$};
	\draw[->](0,-2) --(0,2) node[anchor=south]{$x_N$};
		\draw (0,1.6) --(-0.2,1.5);
		\draw (-0.6,1.3) node[anchor=center]{\small $l/2$};
		\draw (0,0.5) --(-0.2,0.4);
		\draw (-0.6,0.2) node[anchor=center]{\small $d/2$};
		\draw (0,-0.5) --(-0.2,-0.6);
		\draw (-0.6,-0.8) node[anchor=center]{\small $-d/2$};
		\draw (0,-1.6) --(-0.2,-1.7);
		\draw (-0.6,-1.9) node[anchor=center]{\small $-l/2$};
\end{tikzpicture}
\caption{Shifts of the cylinder $A$ allow to reduce to the situation of Lemma \ref{planeplane}}
\label{translation}
\end{figure}

We can now compute the value of $G$ for a cylinder and a fattened hyperplane by shifting the
cylinder parallel to the plane. A sketch of this situation is given in Figure \ref{translation}.
\begin{lem}[Value of $G$ for a cylinder and a fattened hyperplane] \label{cylinderplane}
Given $d,l,R > 0$ such that $d<l$ and the sets
\begin{equation*}
	A_R := Q_R'(0) \times (d/2,l/2) \quad \text{and} \quad B_\infty = \R^{N-1} \times (-l/2,-d/2) \,,
\end{equation*}
it holds that
\begin{equation*}
	 G(A_R,B_\infty) = R^{N-1} \omega_{n-1} \left( 2\ln((l+d)/4) - \ln(l/2) - \ln(d/2) \right) \,,
\end{equation*}
where $\omega_{n-1}$ is the constant defined in Lemma \ref{planeplane}.
\end{lem}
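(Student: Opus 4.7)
The plan is to reduce the computation to Lemma \ref{planeplane} by Fubini, exploiting the translation invariance of $B_\infty$ in the first $N-1$ coordinates (this is the ``shift'' strategy depicted in Figure \ref{translation}). Writing $A_R = Q_R'(0) \times (d/2, l/2)$ and splitting $x = (x', x_N)$, $y = (y', y_N)$, the substitution $y' \mapsto y' + x'$ in the inner $y'$-integral leaves the domain $\R^{N-1}$ invariant and makes the integrand depend only on $x_N, y_N$ and $z' := y' - x'$. Hence
\begin{equation*}
G(A_R, B_\infty) = \int_{Q_R'(0)} dx' \, J = R^{N-1}\, J,
\end{equation*}
where
\begin{equation*}
J := \int_{d/2}^{l/2} \int_{\R^{N-1}} \int_{-l/2}^{-d/2} \frac{dy_N \, dy' \, dx_N}{\bigl(|y'|^2 + (y_N - x_N)^2\bigr)^{(N+1)/2}}.
\end{equation*}

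Next, I would identify $J$ with the one-dimensional quantity computed in Lemma \ref{planeplane}. By Remark \ref{temp1} applied to the pair of slabs $A_\infty = \R^{N-1} \times (d/2, l/2)$ and $B_\infty$, translation invariance in the $x'$-direction implies that $H(A_\infty, B_\infty, (a', a_N))$ is independent of $a'$. A short Fubini/Coarea rearrangement, undoing the $y'$-integration in $J$ while simultaneously organising the integrand via midpoints $a$, then shows that $J$ coincides with $\int_\R H(A_\infty, B_\infty, (0, a_N))\, da_N$. Invoking Lemma \ref{planeplane} directly yields
\begin{equation*}
J = \omega_{n-1}\bigl(2\ln((l+d)/2) - \ln(l/2) - \ln(d/2)\bigr),
\end{equation*}
and multiplying by $R^{N-1}$ gives the claim.

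If one prefers to avoid the Coarea bookkeeping, $J$ can be evaluated directly. The standard identity
\begin{equation*}
\int_{\R^{N-1}} \frac{dy'}{(|y'|^2 + c^2)^{(N+1)/2}} = \frac{\omega_{n-1}}{c^2},
\end{equation*}
obtained by passing to polar coordinates and using $r = |c|\tan\theta$ exactly as in the footnote to Lemma \ref{planeplane}, collapses the inner $y'$-integral and reduces $J$ to
\begin{equation*}
\omega_{n-1}\int_{d/2}^{l/2}\int_{-l/2}^{-d/2} \frac{dy_N\, dx_N}{(y_N - x_N)^{2}},
\end{equation*}
an elementary two-variable integral that is handled precisely as at the end of the proof of Lemma \ref{planeplane}.

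The main (and quite minor) obstacle is the bookkeeping required to identify the inner slice quantity $J$ with the Lemma \ref{planeplane} output; either of the two routes above resolves it. Everything else is a single application of Fubini together with the translation substitution suggested by the figure.
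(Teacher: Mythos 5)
Your proof is correct and, in its second variant, genuinely more elementary than the paper's. The paper stays inside the midpoint machinery of Remark \ref{temp1}: it writes $G(A_R,B_\infty)=\int H(A_R,B_\infty,a)\,da$, tiles $\R^{N-1}$ by the lattice cubes $Q_R'(Rh')$, shifts each tile back to $Q_R'(0)$ (which shifts $A_R$ to $A_R+(Rh',0)$ while leaving $B_\infty$ fixed), and sums over $h'$ so that the shifted copies of $A_R$ reassemble into the full slab $A_\infty$, after which Lemma \ref{planeplane} finishes the computation. Your Fubini-plus-translation substitution $y'\mapsto y'+x'$ achieves the same reduction in one line, and the identity $\int_{\R^{N-1}}(|y'|^2+c^2)^{-(N+1)/2}\,dy'=\omega_{n-1}/c^2$ replaces the entire coarea/polar-coordinate computation by an elementary two-variable integral. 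Your first route (re-identifying $J$ with $\int_\R H(A_\infty,B_\infty,(0,a_N))\,da_N$) is stated loosely --- the ``short Fubini/Coarea rearrangement'' is essentially the content of the paper's lattice-shift step, so on its own it is close to asserting what is to be proved --- but the second route is self-contained and does not need that identification, so the proof stands.

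One point you should not gloss over: carrying out the elementary integral gives
\begin{equation*}
	\int_{d/2}^{l/2}\int_{-l/2}^{-d/2}\frac{dy_N\,dx_N}{(y_N-x_N)^2}
	= 2\ln\left(\frac{l+d}{2}\right)-\ln l-\ln d
	= 2\ln\left(\frac{l+d}{4}\right)-\ln\left(\frac{l}{2}\right)-\ln\left(\frac{d}{2}\right),
\end{equation*}
which differs from the formula stated in the lemma by the additive constant $2\omega_{n-1}\ln 2$ per unit area. The discrepancy originates in Lemma \ref{planeplane} itself: the antiderivative $\ln(d/2+a_N)+\ln(l/2-a_N)$ evaluated at $a_N=(l-d)/4$ gives $2\ln((l+d)/4)$, not $2\ln((l+d)/2)$ as written there. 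This is a typo in the paper rather than a flaw in your argument (the constant is immaterial later, where only the divergent $-\ln(d/2)\sim\ln\lambda_\varepsilon$ term matters), but since your direct computation is precisely the one that detects it, you should not claim that the elementary route reproduces the stated constant verbatim.
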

\begin{proof}
We compute
\begin{align*}
	G(A_R,B_\infty) &= \int_{\R^N} H(A_R,B_\infty,a) \,da = \sum_{h' \in \mathbb{Z}^{N-1}}
		\int_{Q_R'((Rh',0))} \int_{\R} H(A_R,B_\infty,(a',a_N)) \,da' \,d a_N \\
	&=  \sum_{h' \in \mathbb{Z}^{N-1}} \int_{Q_R'(0)} \int_{\R}
		H(A_R+(Rh',0),B_\infty,(a',a_N)) \,da' \,d a_N \\
	&= \int_{Q_R'(0)} \int_{\R} H(A_\infty,B_\infty,(a',a_N)) \,da_N \,da' \\
	&= R^{N-1} \omega_{n-1} \left( 2\ln((l+d)/4) - \ln(l/2) - \ln(d/2) \right),
\end{align*}
where in the last equality we used Lemma \ref{planeplane}.
\end{proof}

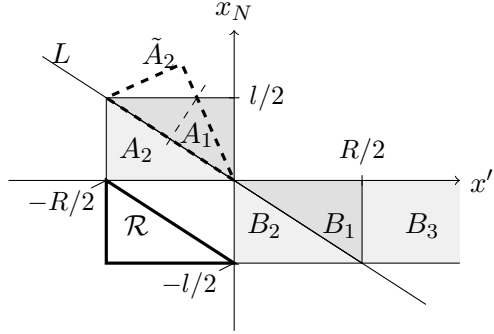
\begin{figure}
\begin{tikzpicture}
	\fill[fill = black!6](1.7,0) -- (1.7,-1.1) -- (3,-1.1) -- (3,0);
		\draw (1.7,-1.1)  -- (3,-1.1);
		\draw (2.5,-0.6) node[anchor = center]{$B_3$};
	\filldraw[fill = black!6](0,0) -- (1.7,-1.1) -- (0,-1.1);
		\draw (0.4,-0.6) node[anchor = center]{$B_2$};
	\filldraw[fill = black!12](0,0) -- (1.7,-1.1) -- (1.7,0);
		\draw (1.4,-0.6) node[anchor = center]{$B_1$};
	\filldraw[fill = black!6](-1.7,0) -- (0,0) -- (-1.7,1.1) -- cycle;
		\draw (-1.3,0.4) node[anchor = center]{$A_2$};
	\filldraw[fill = black!12] (0,0) -- (-1.7,1.1) -- (0,1.1);
		\draw (-0.5,0.6) node[anchor = center]{$A_1$};
	\draw[dashed, line width = 1.2pt] (0,0) -- (-1.7,1.1) -- (-0.696,1.551) -- cycle;
		\draw (-1,1.7) node[anchor = center]{$\tilde A_2$};
	\draw [dashed](-0.384,1.27) -- (-0.934,0.42);
	\draw (-2.55,1.65) -- (2.55,-1.65);
		\draw (-2.3,1.7) node[anchor = center]{$L$};
	\draw [line width = 1.2pt](-1.7,-1.1) -- (0,-1.1) -- (-1.7,0) -- cycle;
	\draw (-1.3,-0.6) node[anchor = center]{$\mathcal R$};
	\draw[->](-3,0) --(3,0) node[anchor=west]{$x'$};
	\draw[->](0,-2) --(0,2) node[anchor=south]{$x_N$};
	\draw (1.7,-2pt) --(1.7,2pt) node[anchor = south]{\small $R/2$};
	\draw (-1.9,-0.1) --(-1.7,0);
	\draw (-2.3,-0.3) node[anchor = center]{\small $-R/2$};
	\draw (-2pt,1.1) -- (2pt,1.1) node[anchor=west]{\small $l/2$};
	\draw (0,-1.1) --(-0.2,-1.2);
	\draw (-0.6,-1.4) node[anchor=center]{\small $-l/2$};
\end{tikzpicture}
\caption{The decomposition of $A$ and $B$ from the proof of Lemma \ref{cylindercomplement}
and the set $\mathcal R$ from Remark \ref{cylindercone}}
\label{subdivision}
\end{figure}

\begin{figure}
\begin{tikzpicture}
	\filldraw[fill = black!12] (-1.7,1.1) -- (1.7,1.1) -- (0,0) -- cycle;
	\draw (-0.3, 0.6) node[anchor = center ]{$D$};
	\fill[fill = black!12] (-3,0) -- (3,0) -- (3,-1.1) -- (-3,-1.1);
	\draw (-2.5, -0.6) node[anchor = center ]{$\hat B$};
	\draw (-3,0) -- (3,0);
	\draw (-3,-1.1) -- (3,-1.1);
	\draw[->](-3,0) --(3,0) node[anchor=west]{$x'$};
	\draw[->](0,-2) --(0,2) node[anchor=south]{$x_N$};
	\draw (1.7,-2pt) --(1.7,2pt) node[anchor = south]{\small $R/2$};
	\draw (-1.7,-2pt) --(-1.7,2pt) node[anchor = south]{\small $-R/2$};
	\draw (0,-1.1) --(-0.2,-1.2);
	\draw (-0.6,-1.4) node[anchor=center]{\small $-l/2$};
	\draw (0,1.1) --(-0.2,1.2);
	\draw (-0.6,1.4) node[anchor=center]{\small $l/2$};
\end{tikzpicture}
\caption{The sets $D$ and $\hat B$ from Lemma \ref{cylindercomplement}}
\label{triangleplane}
\end{figure}
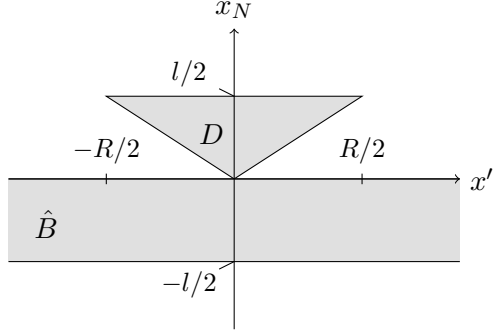

\begin{lem}[Estimate of $G$ for a cylinder and its complement inside a fattened hyperplane]
	 \label{cylindercomplement}
Given $d,l \in \R$ such that $0 \leq d<l \leq 8/3$ and the set $A_R = Q_R'(0) \times (d/2,l/2)$, it holds that
\begin{equation*}
	G \left(A_R, Q_R'(0)^C \times (-l/2,-d/2) \right) \leq C(N) R^{N-1} \left( 1 - \ln(l/2) \right) \,.
\end{equation*}
\end{lem}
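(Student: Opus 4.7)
The idea is to pass to displacement coordinates $z = y-x$, exploit the product structure of the two sets and then isolate the geometric cancellation that keeps the bound independent of $d$. Writing
$$G\!\left(A_R, Q_R'(0)^C\times(-l/2,-d/2)\right) = \int_{\mathbb{R}^N}\frac{\mu(z)}{|z|^{N+1}}\,dz,$$
where $\mu(z):=\bigl|\{x\in A_R : x+z\in Q_R'(0)^C\times(-l/2,-d/2)\}\bigr|$, I would use the fact that $\mu$ factors as $\mu(z)=\mu_1(z')\cdot\mu_2(z_N)$ with
$$\mu_2(z_N) = \min(|z_N|-d,\, l-|z_N|)\,\chi_{(d,l)}(|z_N|),\qquad \mu_1(z') = R^{N-1} - \prod_{i=1}^{N-1}\max(R-|z_i'|,0).$$
Both factors follow from an elementary slice computation (the $z_N$-factor is exactly the one that appears in the proof of Lemma \ref{planeplane}, the $z'$-factor measures those $x'\in Q_R'(0)$ whose translate escapes the cube).

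\textbf{Key step.} The bound must be uniform in $d$, so the divergent behaviour that Lemma \ref{cylinderplane} captures (namely the $-\ln(d/2)$ term, coming from $z'\approx 0$) has to be killed by the exclusion $y'\in Q_R'(0)^C$. This happens at the level of $\mu_1$: expanding $\prod(1-|z_i'|/R)=1-\sum|z_i'|/R + O(|z'|^2/R^2)$ one gets the \emph{boundary estimate}
$$\mu_1(z') \le C(N)\, R^{N-2}\,|z'|\qquad \text{for } |z'|\le R,$$
while for $|z'|>R$ the trivial bound $\mu_1(z')\le R^{N-1}$ suffices. Essentially, near $z'=0$ the measure $\mu_1$ vanishes linearly instead of being of order $R^{N-1}$, which compensates exactly the singularity of $|z|^{-(N+1)}$ at $z'=0$ that caused the $-\ln(d/2)$ divergence.

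\textbf{Remaining computations.} I would split the $z'$-integration into $|z'|\le R$ and $|z'|>R$, and reduce each to a one-dimensional polar integral in the spirit of Lemma \ref{planeplane}. This produces an estimate of the form
$$I(u) := \int_{\mathbb{R}^{N-1}} \frac{\mu_1(z')}{(|z'|^2+u^2)^{(N+1)/2}}\,dz' \;\le\; C(N)\min\!\left(\frac{R^{N-2}}{u},\,\frac{R^{N-1}}{u^2}\right),$$
the two bounds crossing at $u\sim R$. Inserting this into $\int_d^l \mu_2(u)\,I(u)\,du$ and using $\mu_2(u)\le \min(u-d,l-u)$ together with $l\le 4/3$, one obtains the desired bound $C(N)R^{N-1}(1-\ln(l/2))$, the logarithm coming from the range of $u$ over which $u\,I(u)$ behaves like $R^{N-1}/u$. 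Geometrically, this is the content of the decomposition drawn in Figure \ref{triangleplane}, where the triangular cone $D$ captures the portion of the interaction that resembles the Lemma \ref{cylinderplane} setup and $\hat B$ is the fattened hyperplane into which the complementary region is embedded.

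\textbf{Main obstacle.} The delicate point is to combine the two regimes of $I(u)$ so that the final bound has no $\ln R$ factor and is uniform in $d$ as $d\downarrow 0$. Tracking constants carefully across the sub-cases $R\le l$ and $R\ge l$ (together with $l\le 4/3$) is where most of the bookkeeping sits; the use of $\mu_1(z')\le CR^{N-2}|z'|$ in the small-$|z'|$ regime is what converts the divergence of Lemma \ref{cylinderplane} into the finite logarithmic factor $(1-\ln(l/2))$.
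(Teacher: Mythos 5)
Your reduction is correct and is a genuinely different route from the paper's: the paper first maps the cube to a ball by a bi-Lipschitz map, reduces to a two-dimensional problem in cylindrical coordinates, and then controls the resulting planar energy by a triangle decomposition with two reflection arguments, finally estimating $G(D,\hat B)$ by approximating the triangle $D$ with horizontal strips and applying Lemma \ref{cylinderplane} to each strip. Your autocorrelation formula $G=\int\mu(z)|z|^{-(N+1)}\,dz$ with $\mu=\mu_1\mu_2$, the linear vanishing $\mu_1(z')\le C(N)R^{N-2}|z'|$ for $|z'|\le R$, and the resulting bound $I(u)\le C(N)\min\bigl(R^{N-2}/u,\,R^{N-1}/u^2\bigr)$ are all correct (and sharp up to dimensional constants), and they avoid the reflections entirely.

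The gap is in the last step, and it is not a bookkeeping issue. Carrying out $\int_d^l\mu_2(u)I(u)\,du$ with your bounds gives $C R^{N-2}\int_0^{\min(l,R)}\frac{\mu_2(u)}{u}\,du\le CR^{N-2}\min(l,R)$ from the range $u\le R$, which is fine; but on the range $R\le u\le l/2$ (nonempty as soon as $l>2R$) you get
\begin{equation*}
\int_R^{l/2}\mu_2(u)\,\frac{C R^{N-1}}{u^2}\,du\;\sim\;R^{N-1}\int_R^{l/2}\frac{du}{u}\;=\;R^{N-1}\ln\frac{l}{2R}\,,
\end{equation*}
i.e.\ the logarithm you obtain is $+\ln\bigl(l/(2R)\bigr)$, not $-\ln(l/2)$, and it is not bounded by $C(N)\bigl(1-\ln(l/2)\bigr)$ uniformly in $R$. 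Moreover, because your estimates are two-sided (one has $\mu_1(z')=R^{N-1}$ whenever $\max_i|z_i|\ge R$, hence $I(u)\ge cR^{N-1}/u^2$ for $u\ge\sqrt{N-1}\,R$, and $\mu_2(-u)\ge u/2$ for $2d<u<l/2$), the same computation yields the matching lower bound $G\ge c(N)R^{N-1}\ln\bigl(l/(2\sqrt{N-1}R)\bigr)$ for $d$ small. So no refinement of the constants can close the argument when $l\gg R$: taking $l$ fixed and $R\to0^+$, the left-hand side is of order $R^{N-1}\ln(1/R)$ while the asserted right-hand side is $O(R^{N-1})$. In other words, your (correct) intermediate estimates show that the inequality as stated cannot be proved without an additional restriction such as $l\le CR$; in that restricted regime your argument does close, giving the stronger bound $C(N)R^{N-2}l\le C(N)R^{N-1}$ with no logarithm at all, and this is the only regime in which the Lemma is actually invoked with a dimensional constant (in Corollary \ref{correction_cylinderbound2} and its applications one has $l=\varepsilon r\ll R$). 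To submit a proof you should therefore either add the hypothesis $l\lesssim R$ (and then finish with the $u\le R$ regime alone) or replace the right-hand side by something of the form $C(N)R^{N-2}\bigl(l+R\ln^+(l/R)\bigr)$; as written, the "main obstacle" you flag at the end is precisely the point where the proof cannot be completed.
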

\begin{proof}
Let us first note that the map $T: \R^{N-1} \times \R \rightarrow \R^{N-1} \times \R$, defined
as the one-homogeneous extension of the map $B_{1/2}'(0) \times \R \rightarrow Q_1'(0) \times \R$
mapping each point $(x',x_N)$ to the intersection of $\partial Q_1'(0) \times \{ x_N \}$ with
 the half-line with end-point $(0,x_N)$ and passing through $(x',x_N)$,
is Lipschitz with Lipschitz inverse. Therefore, we can estimate equivalently
\newline
$G \left( B_{R/2}'(0) \times (d/2,l/2), B_{R/2}'(0)^C \times (-l/2,-d/2) \right)$.
Similarly to the proof of Lemma \ref{planeplane}
we can now reduce to the two-dimensional case and obtain
\begin{equation} \label{temp2}
	G\left(A_R, Q_R'(0)^C \times \left(-\frac{l}{2},-\frac{d}{2}\right)\right) \leq
		C(N) R^{N-2} G \left( \left( -\frac{R}{2},0 \right) \times \left( \frac{d}{2}, \frac{l}{2} \right),
		(0,\infty) \times \left(-\frac{l}{2},-\frac{d}{2} \right) \right) \,:
\end{equation}
In fact, we set
$i: \R^N \rightarrow \R^2, \,i(x) := (|x'|,x_N)$ and compute
\footnote{In the second equality we used the Coarea formula for $x$. To obtain the first inequality
we wrote $y$ in cylindrical coordinates
with respect to the axis $\{x'\} \times \R$ and used the fact that, given $x \in B_{R/2}'(0) \times (d/2,l/2)$,
all points $y \in B_{R/2}'(0)^C \times(-l/2,-d/2)$ satisfy $|y' - x'| > R/2-|x'|$.}
\begin{align*}
	&G \left(B_{R/2}'(0) \times (d/2,l/2), B_{R/2}'(0)^C \times 
		(-l/2,-d/2)\right) \\
	&\quad = \int_{B_{R/2}'(0) \times (d/2,l/2)}
		\int_{B_{R/2}'(0)^C \times (-l/2,-d/2)} \frac{1}{|y-x|^{N+1}} \,dy\,dx \\
	&\quad = \int_{(0,R/2) \times (d/2,l/2)} \int_{ \{i(x) = (u,v)\} }
		\int_{B_{R/2}'(0)^C \times (-l/2,-d/2)} \frac{1}{|y-x|^{N+1}} \,dy\,d\h^{N-2}(x) \,d(u,v) \\
	&\quad \leq \int_{(0,R/2) \times (d/2,l/2)} \int_{ \{i(x) = (u,v)\} } C(N) \\
	&\quad \quad \quad \int_{(R/2- |x'|, \infty) \times (-l/2,-d/2)}
		\frac{ r^{N-2}}{|(r,y_N-x_N)|^{N+1}} \,d(r,y_N) \,d\h^{N-2}(x) \,d(u,v) \\
	&\quad \leq C(N) \int_{(0,R/2) \times (d/2,l/2)} \h^{N-2}( \partial B_u'(0) \times\{v\})
		\int_{(R/2- u, \infty) \times (-l/2,-d/2)} \frac{1}{|(r,y_N-v)|^3} \,d(r,y_N) \,d(u,v) \\
	&\quad \leq C(N) R^{N-2} G \left( (-R/2,0) \times (d/2,l/2), (0,\infty) \times (-l/2,-d/2) \right) \,.
\end{align*}

Let us note that, by the very definition of $G$,
\begin{equation*}
G \left( \left( -\frac{R}{2},0 \right) \times \left( \frac{d}{2}, \frac{l}{2} \right),
		(0,\infty) \times \left(-\frac{l}{2},-\frac{d}{2} \right) \right) \leq
G \left( \left( -\frac{R}{2},0 \right) \times \left( 0, \frac{l}{2} \right),
		(0,\infty) \times \left(-\frac{l}{2},0 \right) \right) \,.
\end{equation*}
It remains to estimate the right hand side of the previous
inequality. To this end we define
\begin{equation*}
A:=(-R/2,0) \times (0,l/2) \quad \text{and} \quad  B:= (0,\infty) \times (-l/2,0) 
\end{equation*}
and subdivide $A$ in two triangles $A_1,A_2$ and $B$ in the two triangles $B_1,B_2$
and the remainder $B_3$, as in Figure \ref{subdivision}. 
In addition, let us define $\tilde A_2$ to be the reflection of $A_2$ along the
line $L$ through $0$ and $(-R/2,l/2)$.
Given a point $y \in B_1 \cup B_3$ and a point $x \in A_2$, its reflection $\tilde x \in \tilde A_2$
satisfies $|y - \tilde x| \leq |y-x|$.
Note also that the reflection of $\tilde A_2$ along the dashed axis (of symmetry orthogonal to $L$)
equals $A_1$ and that for $\tilde x \in \tilde A_2 \setminus A_1$ its reflection
$\tilde {\tilde x} \in A_1 \setminus \tilde A_2$ satisfies $|y - \tilde{\tilde x}| \leq |y - \tilde x|$.
Therefore we obtain
\begin{align*}
	G(A_2,B_1 \cup B_3) &\leq G(\tilde A_2, B_1 \cup B_3) = G(\tilde A_2 \cap A_1, B_1 \cup B_3) + 
		G(\tilde A_2 \setminus A_1 , B_1 \cup B_3) \\
	&\leq  G(\tilde A_2 \cap A_1, B_1 \cup B_3) +G(A_1 \setminus \tilde A_2, B_1 \cup B_3) \\
	&= G(A_1, B_1 \cup B_3)\,.
\end{align*}
Moreover the pair of sets $(A_2,B_2)$ is obtained by rotating the pair $(B_1,A_1)$ by $\pi$ around the
origin which implies
\begin{equation*}
	G(A_2,B_2) = G(A_1,B_1) \leq G(A_1,B_1 \cup B_3) \,.
\end{equation*}
Summing up,
\begin{align} \label {temp3}
	G(A,B) &= G(A_1,B_1 \cup B_3) + G(A_1,B_2) +  G(A_2,B_1 \cup B_3) + G(A_2,B_2) \\
	&\leq 3  G(A_1,B_1 \cup B_3) + G(A_1,B_2) \leq 3 G(A_1,B) \nonumber \\
	&\leq \frac{3}{4} G(D, \hat B) \nonumber \,,
\end{align}
where we denoted $D$ for the triangle and $\hat B$ for the fattened hyperplane given in
Figure \ref{triangleplane}.
We now claim that
\begin{equation*}
	G(D, \hat B) \leq 2\omega_{n-1} \left(R -2R\ln(l/2) \right) \,,
\end{equation*}
which together with (\ref{temp2}) and (\ref{temp3}) will imply the assertion.
To prove the claim, we approximate from inside $D$ by $2H$ strips , $H \in \mathbb{N}$,  of width
$\frac{R}{2H}$ and obtain the energy
\begin{align*}
	&\sum_{h = 1}^H 2G\left( \left( \frac{(h-1)R}{2H} , \frac{hR}{2H}\right) \times
		 \left( \frac{hl}{2H} , \frac{l}{2}\right) , \R \times \left( -\frac{l}{2},0 \right)\right) \\
	&\leq \sum_{h = 1}^H 2G\left( \left( \frac{(h-1)R}{2H} , \frac{hR}{2H}\right) \times
		 \left( \frac{hl}{4H} , \frac{l}{2}\right) , \R \times \left( -\frac{l}{2},-\frac{hl}{4H} \right)\right) \\
	&= \sum_{h = 1}^H 2\omega_{n-1} \frac{R}{H} \left( 2\ln\left( \frac{hl/H + 2l}{8} \right)
		 - \ln\left( \frac{hl}{4H} \right) - \ln(l/2) \right) \\
	&\leq 2\omega_{n-1} \int_{(0,R)} -\ln\left( \frac{l}{4R} \rho \right) - \ln(l/2)\,d\rho \\
	&= 2\omega_{n-1} \left( R -2R\ln(l/2) + R \ln(2) \right) \,,
\end{align*}
where the  first equality follows by Lemma \ref{cylinderplane} and
second inequality follows from $l \leq 8/3$. Letting $H$ tend to $+ \infty$ we obtain the claim.
\end{proof}

\begin{rem} [Estimate of $G$ for a cylinder and the complement of a cone inside a fattened hyperplane]
	\label{cylindercone}
With the same proof we can estimate the energy of a cube and the complement of a pyramid
inside $B_\infty$:
\begin{equation*}
	G\left( A_R, \left(\conv\left( Q_R'(0) \cup \left\{(-l/2,0)\right\} \right)\right)^C \cap B_\infty \right) \leq 
		C(N) R^{N-1} \left( 1 - \ln(l/2) \right) \,,
\end{equation*}
where the only difference is that after the reduction to dimension two we
get the additional triangle $\mathcal R$ indicated in Figure \ref{subdivision}, which turns
the estimate (\ref{temp3}) in
\begin{equation*}
	G(a,B) \leq \frac{3}{2} G(D, \hat B) \,.
\end{equation*}
\end{rem}

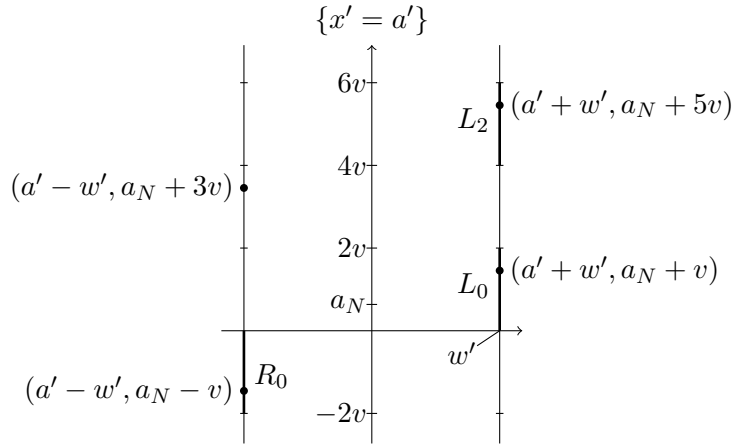
\begin{figure}
\begin{tikzpicture}
	\draw[->](-2,0) --(2,0);
	\draw[->](0,-1.5) --(0,3.8) node[anchor=south]{$\{ x' = a' \}$};
	\draw (1.4,-0.2) --(1.7,0);
	\draw (1.2,-0.3) node[anchor=center]{$w'$};
	\draw (-2pt,3.3) -- (2pt,3.3) node[anchor=east]{$6v$};
	\draw (-2pt,2.2) -- (2pt,2.2) node[anchor=east]{$4v$};
	\draw (-2pt,1.1) -- (2pt,1.1) node[anchor=east]{$2v$};
	\draw (-2pt,-1.1) -- (2pt,-1.1) node[anchor=east]{$-2v$};
	\draw(-1.7,-1.5) --(-1.7,3.8);
	\draw (-1.75 ,3.3) -- (-1.65, 3.3);
	\draw (-1.75 ,2.2) -- (-1.65, 2.2);
	\draw (-1.75 ,1.1) -- (-1.65, 1.1);
	\draw (-1.75 ,-1.1) -- (-1.65, -1.1);
	\draw(1.7,-1.5) --(1.7,3.8);
	\draw (1.75 ,3.3) -- (1.65, 3.3);
	\draw (1.75 ,2.2) -- (1.65, 2.2);
	\draw (1.75 ,1.1) -- (1.65, 1.1);
	\draw (1.75 ,-1.1) -- (1.65, -1.1);
	\draw[line width = 1 pt] (-1.7,-1.1) --(-1.7,0);
	\draw (-1.7,-0.6)node[anchor = west]{$R_0$};
	\draw[line width = 1 pt] (1.7,2.2) --(1.7,3.3);
	\draw (1.7,2.8)node[anchor = east]{$L_2$};
	\draw[line width = 1 pt] (1.7,0) --(1.7,1.1);
	\draw (1.7,0.6)node[anchor = east]{$L_0$};
	\draw (-2pt,0.35) -- (2pt,0.35) node[anchor=east]{$a_N$};
	\fill (-1.7,-0.8) circle [radius = 1.5pt];
	\draw (-1.7,-0.8) node[anchor = east]{$(a' - w', a_N-v)$};
	\fill (1.7,0.8) circle [radius = 1.5pt];
	\draw (1.7,0.8) node[anchor = west]{$(a'+w', a_N+v)$};
	\fill (-1.7,1.9) circle [radius = 1.5pt];
	\draw (-1.7,1.9) node[anchor = east]{$(a'- w', a_N +3v)$};
	\fill (1.7,3) circle [radius = 1.5pt];
	\draw (1.7,3) node[anchor = west]{$(a' + w', a_N+5v)$};
\end{tikzpicture}
\caption{Example of the situation of Lemma \ref{correction_bound} for $i = 0$ and $j = 2$}
\label{reflections}
\end{figure}

\subsection{Lower bounds for the nonlocal energy} \label{temp7}
In this subsection we estimate from below $G(A,B)$ for two disjoint sets 
$A,B$ filling a cylinder up to a set of small measure.
A typical situation in which we would like to estimate $G$ is illustrated in Figure \ref{shading}.
In  Lemma \ref{correction_bound} we consider one-dimensional sections $R_i$ and $L_{i+2j}$
of length $2v$ of a cylinder $S' \times (-l/2,l/2)$; here the points of $R_i \times L_i$
form pairs of points of vertical distance $2v$ and
$L_{i+2h}$ is the vertical translation of $L_i$ by $2h \cdot 2v$;
an illustration is given in Figure \ref{reflections}.
Geometrically speaking, the assertion of Lemma \ref{correction_bound} states that,
if the measure of $(A \cup B)^C$ is small, then the energy of pairs of points which have vertical
distance $2v + 2j \cdot 2v$ can be compared with the energy
of the same points shifted in such a way that vertical distance becomes $2v$.
The estimate of Lemma \ref{correction_bound} will be applied in Lemma \ref{correction_cylinderbound1}.
Therein we compare the energy of $G(A,B)$ on a cylinder
$S' \times (-L/2,L/2),$ where on each vertical section
in an upper portion of the cylinder the average density of $A$ is high,
in a lower portion of the cylinder the average density of $B$ is high
and the $\h^1$- measure of $(A \cup B)^C$ is small, with the energy
$G(A_\infty, B_\infty, S' \times (-L/2,L/2))$ of two fattened hyperplanes inside the cylinder.
Here, the thickness of $A_\infty,B_\infty$ is smaller than the length of the sections with 
high average density of $A, B$, while their distance is comparable to 
the $\h^1$- measure of the sections of $(A \cup B)^C$.
Finally, the estimate from Lemma
\ref{correction_cylinderbound1} is applied in Corollary \ref{correction_cylinderbound2},
the only difference being that now we demand bounds on the volume average density of $A$ in an
upper portion of the cylinder and the volume average density of $B$ on a lower part of the cylinder.

\begin{lem}[Energy lower bound] \label{correction_bound}
Let $S' \subset \R^{N-1}$ be a measurable set, $l > 0$ and $A,B$ be two disjoint subsets of
$S' \times (-l/2,l/2)$ such that for some $d > 0$ it holds
\begin{equation*}
	\h^1 \left( (A \cup B) ^C_{x'} \right) < d \quad \text{for all } x' \in S'\,.
\end{equation*} 
Then, given $u, v > 0$, $a' \in \R^{N-1}$, and for any $w' \in \partial B_u'(0)$ given
$i  \in \mathbb{Z}$ and $j  \in \mathbb{N}$
\footnote{Note that $i$ and $j$ may and in general will depend on $w'$.}
 which satisfy
$i \in \left( \frac{1}{2} - \frac{l}{4v}, \frac{l}{4v} - \frac{1}{2}   \right)$ and
$i + 2j \in \left( \frac{1}{2} - \frac{l}{4v}, \frac{l}{4v} - \frac{1}{2}  \right)$, it holds that
\begin{align*}
	&\int_{\partial B_u'(0)} \int_{(-l/2,l/2)} \chi_A (a' + w', a_n + v) \chi_B(a'-w', a_N -v) \,da_N 
		\,d\h^{N-2}(w') \\
	&\quad \geq \int_{\partial B_u'(0)} \chi_{S'}(a' + w') \chi_{S'}(a' - w') \Bigg(
		\int_{(-v,v)} \chi_{\multimap(R_i \cap B)}(a_N) \chi_{\multimap(L_{i+2j} \cap A)}(a_N) 
		\,da_N - 2 d  \Bigg) \,d\h^{N-2}(w') \\
	&\text{and} \\
	&\int_{\partial B_u'(0)} \int_{(-l/2,l/2)} \chi_B (a' + w', a_n + v) \chi_A(a'-w', a_N -v) \,da_N
		 \,d\h^{N-2}(w') \\
	&\quad \geq \int_{\partial B_u'(0)} \chi_{S'}(a' + w') \chi_{S'}(a' - w') \Bigg(
		\int_{(-v,v)} \chi_{\multimap(R_i \cap A)}(a_N) \chi_{\multimap(L_{i+2j} \cap B)}(a_N)
		\,da_N - 2 d  \Bigg) \,d\h^{N-2}(w') \,,
\end{align*}
where we defined
\begin{equation*}
	R_h := \{ a' -w' \} \times (h2v -2v, h2v)\,, \quad L_h := \{ a' +w' \} \times (h2v, h2v + 2v)
		\quad \text{for } h \in \mathbb{Z}
\end{equation*}
and used the symbol $\multimap (R_h \cap A)$ to denote the image of $R_h \cap A$ under the translation
which maps $R_h$ to an interval centered at zero, and similarly for the other sets.
\footnote{More precisely, we set $\multimap (R_h \cap A)$ for the image of $R_h \cap A$ under the function
$(x',x_N) \mapsto (0, x_N  -h2v + v)$ and $\multimap (L_h \cap B)$ for the image of 
$L_h \cap B$ under the function $(x',x_N) \mapsto (0, x_N  -h2v - v)$. This means in particular that
both $\multimap (R_i \cap A)$ and $\multimap (L_{i+2j} \cap B)$ are subsets of $\{0\} \times (-v,v)$,
which we identify with the interval $(-v,v) \subset \R^1$.}
\end{lem}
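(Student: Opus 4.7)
The plan is to exploit the zig\text{-}zag structure implicit in the statement: the RHS measures pairs of points in $R_i$ (column $a'-w'$) and $L_{i+2j}$ (column $a'+w'$) at vertical distance $(4j+2)v$, while the LHS counts pairs at vertical distance only $2v$, so one must transfer information along a chain of intermediate $2v$-steps. First I would rewrite the LHS inner integral, via the substitution $h = a_N - v$, as $\int \chi_A(a'+w', h+2v)\, \chi_B(a'-w', h)\, dh$ over the admissible range, and then lower-bound it by restricting $h$ to the union of the $2j+1$ disjoint intervals $I_m := ((2i+2m-2)v, (2i+2m)v)$ for $m = 0, 1, \dots, 2j$ (all contained in the admissible range by the assumed bounds on $i$ and $i+2j$).

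Second, on each interval $I_m$ I would perform the translation $h = \tau + (2i+2m-1)v$, which maps $I_m$ onto $(-v,v)$; since the point at $(a'-w', h)$ then lies in $R_{i+m}$ and the point at $(a'+w', h+2v)$ lies in $L_{i+m}$, this rewrites the restriction as $\int_{-v}^{v} \chi_{\multimap(R_{i+m}\cap B)}(\tau)\,\chi_{\multimap(L_{i+m}\cap A)}(\tau)\, d\tau$. Summing, I obtain the key intermediate bound
\[
\text{LHS inner} \;\geq\; \sum_{m=0}^{2j} \int_{-v}^{v} \chi_{\multimap(R_{i+m}\cap B)}(\tau)\,\chi_{\multimap(L_{i+m}\cap A)}(\tau)\, d\tau.
\]

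Third, for each $\tau \in (-v,v)$ in the support of $\chi_{\multimap(R_i \cap B)}\,\chi_{\multimap(L_{i+2j}\cap A)}$, I consider the zig\text{-}zag of $2j+2$ points $x_0, \dots, x_{2j+1}$ at heights $\tau+(2i+2m-1)v$ on column $a'-w'$ and $\tau+(2i+2m+1)v$ on column $a'+w'$ ($m=0,\dots,2j$), and note that the endpoint assumption places the starting point in $B$ and the terminal point in $A$. The goal is then to show that either (i) for some $m$, the $m$-th pair already contributes a good pair (so the inner integrand in the sum is $1$), or (ii) at least one intermediate zig\text{-}zag point lies in $(A\cup B)^C$. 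Since the zig\text{-}zag visits the two columns $a'\pm w'$, and in each column the slice $(A\cup B)^C_{a'\pm w'}$ has $\h^1$\text{-}measure $<d$, the set of $\tau$ for which case (ii) occurs has Lebesgue measure at most $2d$. This yields $\sum_m \int \chi_{\multimap(R_{i+m}\cap B)}\chi_{\multimap(L_{i+m}\cap A)}\,d\tau \geq \int \chi_{\multimap(R_i\cap B)}\chi_{\multimap(L_{i+2j}\cap A)}\,d\tau - 2d$. Multiplying by $\chi_{S'}(a'+w')\chi_{S'}(a'-w')$ (which vanishes otherwise, making the inequality trivial) and integrating over $\partial B_u'(0)$ delivers the first stated inequality; the second one is obtained by the identical argument with $A$ and $B$ swapped.

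The main obstacle will be the combinatorial bookkeeping in step three: one has to match the parity of the transition index along the zig\text{-}zag with the correct pair of columns, and verify that by summing over all $2j+1$ shift indices $m$ one really captures every possible transition regardless of where it occurs. The precise accounting of "one $d$ per column" is what forces the constant to be exactly $2d$ rather than something growing with $j$, and checking this cleanly is the crux of the argument.
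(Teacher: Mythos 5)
Your steps 1 and 2 are correct, and your final accounting of the error term (one $d$ per column, hence $2d$) is exactly the paper's. The gap is in step 3, and it is a real one: the $2j+1$ pairs produced by your step 2, namely $\left(R_{i+m}\cap B,\ L_{i+m}\cap A\right)$ for $m=0,\dots,2j$, all have the \emph{same} orientation ($B$-point on the column $a'-w'$, $A$-point on the column $a'+w'$), so consecutive pairs do not share a point: the upper point of the $m$-th pair sits at height $\tau+(2(i+m)+1)v$ on the column $a'+w'$, while the lower point of the $(m+1)$-th pair sits at the same height but on the opposite column $a'-w'$. Your "zig-zag" is therefore disconnected at every rung, and the dichotomy "either some pair contributes or some chain point lies in $(A\cup B)^C$" fails. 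Concretely: let $B_{a'-w'}$ be exactly the range of $R_i$ and $A_{a'-w'}$ its complement in $(-l/2,l/2)$, and let $A_{a'+w'}$ be exactly the range of $L_{i+2j}$ and $B_{a'+w'}$ its complement. Then $(A\cup B)^C$ is empty on both columns and $\multimap(R_i\cap B)=\multimap(L_{i+2j}\cap A)=(-v,v)$, yet for $j\geq1$ every term of your sum vanishes (for $m=0$ because $L_i\cap A=\emptyset$, for $m\geq1$ because $R_{i+m}\cap B=\emptyset$), so your step-3 inequality would assert $0\geq 2v-2d$, false for $d<v$.

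The missing idea is the reflection $w'\mapsto -w'$: the inequality is not a pointwise statement in $w'$ and only holds after integration over the antipodally symmetric set $\partial B_u'(0)$. The paper inserts a sign $(-1)^{h+i}$ in front of $w'$ in the $h$-th summand, which is admissible under the integral over $\partial B_u'(0)$ because the antipodal map preserves $\h^{N-2}$ there. With this alternation of columns from one rung to the next, the $A$-point of the $h$-th pair coincides with the $B$-point of the $(h+1)$-th pair, the chain of $2j+2$ points is genuinely connected, and the induction you have in mind ("first point in $B$, no pair contributes, no point in $(A\cup B)^C$ $\Rightarrow$ all points in $B$, contradicting the last being in $A$") goes through, with the $2d$ error exactly as you describe. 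In the configuration above it is precisely the contribution from $-w'$ that the left-hand side picks up and that your fixed-orientation decomposition discards.
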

\begin{proof}
We only show the first inequality, the second one being symmetric.
We have
\begin{align*}
	&\int_{\partial B_u'(0)} \int_{(-l/2,l/2)} \chi_A (a' + w', a_n + v) \chi_B(a'-w', a_N -v) 
		\,da_N \,d\h^{N-2}(w') \\
	&\quad \geq \int_{(-v,v)} \sum_{h = - \lfloor \frac{l}{4v} - \frac{1}{2} \rfloor}
		^{\lfloor \frac{l}{4v} - \frac{1}{2} \rfloor} \int_{\partial B_u'(0)} 
		\chi_A (a' + w', a_n + v + 2hv) \chi_B(a'-w', a_N - v + 2hv) \,da_N \,d\h^{N-2}(w')  \\
	&\quad = \int_{(-v,v)} \sum_{h = - \lfloor \frac{l}{4v} - \frac{1}{2} \rfloor}
		^{\lfloor \frac{l}{4v} - \frac{1}{2} \rfloor} \int_{\partial B_u'(0)} 
		\chi_A (a' + (-1)^{h+i} w', a_n + v + h2v) \\
	&\quad\quad\quad \chi_B(a'- (-1)^{h+i}w', a_N - v + h2v) \,da_N \,d\h^{N-2}(w')  \\
	&\quad \geq \int_{(-v,v)} \int_{\partial B_u'(0)}
		\sum_{h = i} ^{i + 2j}  
		\chi_A (a' + (-1)^{h+i}w', a_n + v + h2v) \\
	&\quad\quad\quad \chi_B(a'- (-1)^{h+i}w', a_N - v + h2v) \,da_N \,d\h^{N-2}(w')\,.
\end{align*}
Let us now consider the case that  $a' \pm w' \in S'$ and that $(a'- w', a_N - v + i2v) \in B$. If
\begin{equation*}
	\sum_{h = i} ^{i + 2j}  
		\chi_A (a' + (-1)^{h+i}w', a_n + v + h2v) \chi_B(a'- (-1)^{h+i}w', a_N - v + h2v) = 0 \,,
\end{equation*}
then either at least one of the points $(a' + (-1)^{h+i}w', a_n + v + h2v), (a'- (-1)^{h+i}w', a_N - v + h2v)$
lies in $(A \cup B)^C$ (and this happens, on both the one-dimensional sections
$(S \times (-l/2,l/2))_{a' \pm w'}$ only on a set of $\h^1$-measure bounded by $d$),
or all of the points lie in $B$
 \footnote{We use  the fact that the first point for the summand $h$
 equals the second point for the summand $h+1$,
 namely $(a' + (-1)^{h+i}w', a_n + v + h2v) = (a'- (-1)^{h+1+i}w', a_N - v + (h+1)2v)$.
 If the sum equals zero, then all the summands are zero and we already excluded the case that at
 least one of the points lies in $(A \cup B)^C$.}
, and in particular $(a'+ w', a_N + v + (i+2j)2v) \in B$.
This means that this sum is positive if $(a'+ w', a_N + v + (i+2j)2v) \in A$
and none of the points lies in $(A \cup B)^C$ and therefore
\begin{align*}
	&\int_{(-v,v)} \sum_{h = i} ^{i + 2j}  
		\chi_A (a' + (-1)^{h+i}w', a_n + v + h2v) \chi_B(a'- (-1)^{h+i}w', a_N - v + h2v) \, da_N \\
	&\quad \geq \chi_{S'}(a' + w') \chi_{S'}(a' - w') \Bigg(
		\int_{(-v,v)} \chi_{\multimap(R_i \cap B)}(a_N) \chi_{\multimap(L_{i+2j} \cap A)}(a_N) 
		\,da_N - 2 d  \Bigg) \,,
\end{align*}
concluding the proof.
\end{proof}

\begin{lem}[Lower bound of $G$ on a special cylinder I]\label{correction_cylinderbound1}
Given a measurable set $S' \subset \R^{N-1}$, numbers $d,l,L,r > 0$  such that $l < 3r$
and $A,B$ two disjoint subsets of $S' \times (-L/2,L/2)$ such that for some $\Lambda > 0$
\begin{align*}
	&\frac{\h^1 \left( A_{x'} \cap (L/2 -r, L/2) \right)}{r} > \Lambda + \frac{2l}{r} \,, \\
	&\frac{\h^1 \left( B_{x'} \cap (-L/2 , -L/2 + r) \right)}{r} > \Lambda + \frac{2l}{r} \, \\
	& \h^1 \left( (A \cup B)^C_{x'} \right) < \frac{(3\Lambda - 2)d}{2} \quad \text{for all } x' \in S' \,,
\end{align*}
then it holds that
\begin{equation*}
	G(A,B) \geq (3\Lambda -2) G(A_\infty, B_\infty, S' \times \R) \,,
\end{equation*}
where $A_\infty = \R^{N-1} \times (d/2,l/2)$ and $B_\infty = \R^{N-1} \times (-l/2,-d/2)$.
\end{lem}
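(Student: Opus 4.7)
First, I would pass to midpoint coordinates in both energies, as in Remark \ref{temp1} and the coarea step in the proof of Lemma \ref{planeplane}. Because $A_\infty$ lies strictly above $B_\infty$, all pairs contributing to $G(A_\infty, B_\infty, S'\times\R)$ have $v>0$ in these coordinates, while for $G(A,B)$ one obtains a lower bound by discarding the nonnegative contribution of pairs with $v<0$ (pairs having an $A$-point below a $B$-point). Both quantities then take the form
\[
c_N \int_{a'\in\R^{N-1}} \int_{u,v > 0} \frac{P(a',u,v)}{(2\sqrt{u^2+v^2})^{N+1}}\, dv\, du\, da'
\]
with a common constant $c_N$, where for $G(A_\infty, B_\infty, S'\times\R)$ the integrand factorises as $P_\infty(a',u,v) = \phi(a',u)\,\mathrm{overlap}(v)$ with $\phi(a',u) := \int_{\partial B_u'(0)} \chi_{S'}(a'+w')\chi_{S'}(a'-w')\, d\h^{N-2}(w')$ and $\mathrm{overlap}(v) := \min(2v-d,l-2v)_+$, while for $G(A,B)$ the integrand is
\[
P(a',u,v) = \int_{\partial B_u'(0)} \int_\R \chi_A(a'+w',a_N+v)\chi_B(a'-w',a_N-v)\, da_N\, d\h^{N-2}(w').
\]
The desired inequality will follow from the pointwise bound $P\geq(3\Lambda-2)P_\infty$, which on restricting to $w'$ with $a'\pm w'\in S'$ amounts to the one-dimensional estimate $\int_\R \chi_A(a'+w',a_N+v)\chi_B(a'-w',a_N-v)\,da_N \geq (3\Lambda-2)\mathrm{overlap}(v)$.

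To establish this estimate I would apply Lemma \ref{correction_bound} with parameter $L$ in place of its $l$ and with bad-set bound $(3\Lambda-2)d/2$ in place of its $d$. For any admissible $(i,j)$ the lemma gives a lower bound of $\int_{-v}^v \chi_{\multimap(R_i\cap B)}\chi_{\multimap(L_{i+2j}\cap A)}\,da_N - (3\Lambda-2)d$; by inclusion--exclusion inside the interval $(-v,v)$ this is at least $2v(\alpha(i)+\beta(i,j)-1)_+ - (3\Lambda-2)d$, where $\alpha(i) := \h^1(R_i\cap B_{a'-w'})/(2v)$ and $\beta(i,j) := \h^1(L_{i+2j}\cap A_{a'+w'})/(2v)$. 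A short algebraic computation then shows that it suffices to produce $(i,j)$ with $\alpha+\beta\geq 3\Lambda-1$: this implies $2v(\alpha+\beta-1) - (3\Lambda-2)d \geq (3\Lambda-2)(2v-d)$, which equals $(3\Lambda-2)\mathrm{overlap}(v)$ when $\mathrm{overlap}(v)=2v-d$ and, using $4v\geq l+d$ in the remaining subrange, dominates $(3\Lambda-2)(l-2v)$ when $\mathrm{overlap}(v)=l-2v$.

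Constructing such $(i,j)$ splits into two regimes. When $2v\leq r$, the admissible indices $i$ with $R_i\subset(-L/2,-L/2+r)$ tile the bottom band up to a boundary of length less than $2v$, so the total $B$-mass across these layers (on the section at $a'-w'$) is at least $\Lambda r+2l-2v$, and the analogous estimate holds for $A$ on the top band at $a'+w'$. Since $i$ and $i+2j$ share parity, I would split each total by the parity of the index and, using $\max_\sigma(B_\sigma+A_\sigma)\geq (B_e+B_o+A_e+A_o)/2$, select the parity class that simultaneously maximises $B_\sigma+A_\sigma$; choosing the densest admissible indices inside that class yields $\alpha+\beta \geq 2\Lambda + 4(l-v)/r$, which exceeds $3\Lambda-1$ since the density hypothesis $\Lambda+2l/r<1$ forces $\Lambda<1$. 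When $2v>r$, no single layer fits inside the $r$-band; instead I would pick $i,j$ with $R_i\supset(-L/2,-L/2+r)$ and $L_{i+2j}\supset(L/2-r,L/2)$, whereupon the density hypothesis directly gives $\alpha,\beta\geq(\Lambda r+2l)/(2v)$, and the inequality $\alpha+\beta\geq 3\Lambda-1$ reduces to $\Lambda r+2l\geq(3\Lambda-1)v$, which holds thanks to $v<l/2$, $l<3r$ and $\Lambda\leq 1$.

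The step I expect to be the most delicate is the parity matching in the first regime: the parity class that is $A$-rich on the top need not coincide with the one that is $B$-rich on the bottom, so one cannot simply take the maxima independently. The averaging identity $B_e+A_e+B_o+A_o\geq 2(\Lambda r+2l-2v)$ circumvents this by guaranteeing that at least one matched sum $B_\sigma+A_\sigma$ is large, but the argument only closes because the buffer ``$+\,2l/r$'' built into the density hypothesis absorbs the boundary loss $-2v$ arising from layers that do not tile the $r$-bands exactly. Once the pointwise inequality $P\geq(3\Lambda-2)P_\infty$ is in hand, integration against the common kernel $(2\sqrt{u^2+v^2})^{-(N+1)}$ over $(a',u,v)$ concludes the proof.
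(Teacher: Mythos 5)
Your proposal follows essentially the same route as the paper's proof: pass to the antipodal/midpoint coordinates of Remark \ref{temp1}, reduce to a one\--dimensional estimate on vertical sections, invoke Lemma \ref{correction_bound} with the cylinder height $L$ and bad-set bound $(3\Lambda-2)d/2$, and produce indices $i$, $i+2j$ whose combined densities exceed $3\Lambda-1$; the paper's index selection (first an $i\in H_B$ with $B$-density $>\Lambda$, then a same-parity index in $H_A$ with $A$-density $>2\Lambda-1$) is just a sequential variant of your parity-averaging, and the final comparison with $(3\Lambda-2)(2v-d)\geq(3\Lambda-2)\,\mathrm{overlap}(v)$ is identical. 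One remark: your second regime $2v>r$ is vacuous. Since the left-hand sides of the first two hypotheses are at most $1$, a nonempty $S'$ forces $\Lambda+2l/r<1$, hence $2l<r$ and $2v<l<r/2<r$. This is fortunate, because the argument you give there is not sound as stated: the containments $R_i\supset(-L/2,-L/2+r)$ and $L_{i+2j}\supset(L/2-r,L/2)$ with $i$ and $i+2j$ of equal parity impose alignment constraints on the lattice $\{h\cdot 2v\}$ that need not be satisfiable (a band of length $r<2v$ can straddle a lattice point). You should replace that case by the observation that it never occurs under the hypotheses.
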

\begin{proof}
Let us use the notation from the statement of Lemma \ref{correction_bound} and fix any
 $a',w' \in \R^{N-1}$ satisfying $a' \pm w' \in S'$ and $v \in (0, l/2))$. 
Let us write $H_B$ for the set of indices $h \in \mathbb{Z}$ for which 
$R_h$ is contained in $\{a' -w'\}\times (-L/2,-L/2 + r)$;
these cover $\{a' -w'\}\times (-L/2,-L/2 + r)$ up to a set of measure at most $2l$.
Similarly, we write $H_A$ for the set of indices $h \in \mathbb{Z}$ for which 
$L_h$ is contained in $\{a' + w'\}\times (L/2-r,L/2)$;
these cover $\{a' +w'\}\times (L/2-r,L/2)$ up to a set of measure at most $2l$. This means that
\begin{equation*}
	\frac{\h^1\left( B_{a' - w'} \cap \bigcup_{h \in H_B} R_h  \right)}
		{\h^1\left(\bigcup_{h \in H_B} R_h  \right)} > \Lambda \quad \text{and} \quad
	\frac{\h^1\left( A_{a' + w'} \cap \bigcup_{h \in H_A} L_h  \right)}
		{\h^1\left(\bigcup_{h \in H_A} L_h  \right)} > \Lambda \,.
\end{equation*}
Therefore and because $l < 3r$, there exist $i \in H_B$ and $i+2j \in H_A$ such that  for $R_i =: R(a' -w')$
and for $L_{i+2j} =: L(a'+w')$ we have
\begin{equation*}
	\frac{\h^1\left( B_{a' - w'} \cap R_i  \right)}
		{\h^1(R_i)} > \Lambda \quad \text{and} \quad
	\frac{\h^1\left( A_{a' + w'} \cap L_{i+2j}  \right)}
		{\h^1 ( L_{i+2j})} > 2\Lambda -1 \,,
\end{equation*}
which implies
\begin{equation} \label{correction_temp1}
	\frac{\h^1 \left( \multimap (B \cap R(a' -w')) \cap \multimap(A \cap L(a'+w')) \right)}{2v} > 3 \Lambda -2
			\,.
\end{equation}
We estimate as in Lemma \ref{planeplane}
\begin{align*}
	&G(A,B) \geq \int_{\R^{N-1}} \int_{(0,\infty)} 2^N \int_{(d/2, l/2)} \frac{1}{(2 |u,v|)^{N+1}}
		\int_{(-L/2,L/2)} \\
	&\quad\quad \int_{\partial B_u'(0)} \chi_A(a' + w', a_N + v) \chi_B(a' -w', a_N - v) \,d\h^{N-1}(w')
		\,da_N \,dv \,du \,da' \\
	&\quad \geq \int_{\R^{N-1}} \int_{(0,\infty)} 2^N \int_{(d/2, l/2)} \frac{1}{(2 |u,v|)^{N+1}}
		\int_{\partial B_u'(0)} \chi_{S'}(a' +w') \chi_{S'}(a' -w') \\
	&\quad\quad \Bigg( \int_{(-v,v)}
		\chi_{\multimap (B \cap R(a' -w')) \cap \multimap(A \cap L(a'+w'))} (a_N) 
		\,da_N \,d\h^{N-1}(w') - 2 \frac{(3\Lambda - 2)d}{2} \Bigg) \,dv \,du \,da' \\
	&\quad  \geq (3\Lambda - 2) G(A_\infty, B_\infty, S' \times \R) \,,
\end{align*}
where in the second inequality we applied Lemma \ref{correction_bound}
and in the third inequality (\ref{correction_temp1}).
\end{proof}

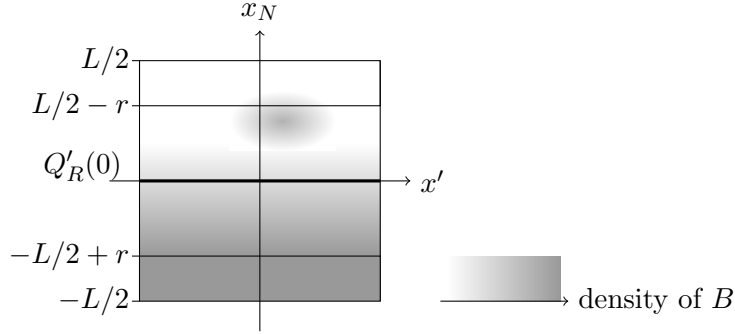
\begin{figure} 
\begin{tikzpicture}
	\shade [shading=axis, top color = white, bottom color = black!40] (-1.6,-1) rectangle (1.6,0.5);
	\shade [shading = radial, outer color = white, inner color = black!30] (-0.4,0.4) rectangle (1,1.2);
	\draw (-1.6,-1.6) -- (-1.6,1.6);
	\draw (1.6,-1.6) -- (1.6,1.6);
	\draw (-1.6,1) -- (1.6,1) -- (1.6,1.6) -- (-1.6,1.6) -- cycle;
	\filldraw [fill = black!40] (-1.6,-1) -- (1.6,-1) -- (1.6,-1.6) -- (-1.6,-1.6) -- cycle;
	\draw[->](-2,0) --(2,0) node[anchor=west]{$x'$};
	\draw[->](0,-2) --(0,2) node[anchor=south]{$x_N$};
	\draw (-1.7,1) -- (-1.6,1) node[anchor=east]{$L/2 -r$};
	\draw (-1.7,-1) -- (-1.6,-1) node[anchor=east]{$-L/2 + r$};
	\draw (-1.7,1.6) -- (-1.6,1.6) node[anchor=east]{$L/2$};
	\draw (-1.7,-1.6) -- (-1.6,-1.6) node[anchor=east]{$-L/2$};
	\draw [line width = 1.2pt] (-1.6,0) -- (1.6,0);
	\draw (-1.7,0.2)  node [anchor = east]{$Q_R'(0)$};
	\shade [shading=axis, left color = white, right color = black!40] (2.5,-1.6) rectangle (4,-1);
	\draw[->](2.4,-1.6) --(4.1,-1.6) node[anchor=west]{density of $B$};
\end{tikzpicture}
\caption{Example of the situation of Corollary \ref{correction_cylinderbound2};
	the density of $A \cup B$ is assumed to be big}
\label{shading}
\end{figure}

An example of a situation in which we would like to apply the following Corollary is given in Figure
\ref{shading}.
\begin{cor}[Lower bound of $G$ on a special cylinder II]\label{correction_cylinderbound2}
Given $r >0$, a cylinder $Q_R'(0) \times (-L/2,L/2)$, a number $\lambda > 0$, a constant $c>0$
and $A,B \subset \R^N$ two disjoint sets such that
\begin{align*}
	&\frac{|Q_R'(0) \times (-L/2,-L/2+r) \cap B|}{|Q_R'(0) \times (-L/2,-L/2+r)|} > 1 - \lambda^2 \,, \\
	&\frac{|Q_R'(0) \times (L/2-r,L/2) \cap A|}{|Q_R'(0) \times (L/2-r,L/2)|} > 1 - \lambda^2 \text{ and} \\
	&|Q_R'(0) \times (-L/2,L/2) \setminus (A \cup B)| < \frac{c(1-3\lambda -6\varepsilon)}
		{2 \lambda_\varepsilon}\,,
\end{align*}
then, for $r \varepsilon < 8/3$, it holds that
\begin{align*}
	&G \left( A,B,Q_R'(0) \times (-L/2,L/2) \right) \\
	&\quad \geq R^{N-1}\omega_{n-1} (1 - 3\lambda  - 6\varepsilon)
		(1 - 2\xi(N-1)(2\lambda + c\varepsilon) - 2\varepsilon)\\
	&\quad\quad\left( \ln\left( \frac{\varepsilon r}{8}\right) - \ln \left( \frac{1}{2 \varepsilon
		\lambda_\varepsilon} \right)
		\right) - R^{N-1} C(N) \left( 1 - \ln\left( \frac{\varepsilon r}{2}\right)  \right) \,,
\end{align*}
where $\xi(N-1)$ and $C(N)$ are dimensional constants.
\end {cor}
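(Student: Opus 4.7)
The plan is to reduce the volume-average (density) hypotheses of the corollary to the sectional hypotheses required by Lemma~\ref{correction_cylinderbound1} via a Chebyshev argument, apply that lemma on the resulting "good" subset $S' \subset Q_R'(0)$, and then bound $G(A_\infty,B_\infty,S' \times \R)$ from below using Lemma~\ref{cylinderplane} and Lemma~\ref{cylindercomplement} together with a direct estimate of the loss coming from $Q_R'(0) \setminus S'$.

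First I would fix the parameters $\Lambda := 1 - \lambda - 2\varepsilon$, $l := \varepsilon r$ and $d := 1/(\varepsilon \lambda_\varepsilon)$, so that $3\Lambda - 2 = 1 - 3\lambda - 6\varepsilon$ and $\Lambda + 2l/r = 1 - \lambda$ match the prefactor and density thresholds appearing in Lemma~\ref{correction_cylinderbound1}. Then I would let $S'$ be the set of those $x' \in Q_R'(0)$ on which the three sectional inequalities of that lemma hold, namely $\h^1(A_{x'} \cap (L/2 - r, L/2)) > (1-\lambda) r$, $\h^1(B_{x'} \cap (-L/2, -L/2+r)) > (1-\lambda) r$, and $\h^1((A \cup B)^C_{x'}) < (3\Lambda - 2) d/2$. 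A one-line Markov/Chebyshev argument applied to each of the three volume hypotheses of the corollary gives $|Q_R'(0) \setminus S'| \leq (2\lambda + c\varepsilon) R^{N-1}$ (absorbing a dimensional constant into $c$). Lemma~\ref{correction_cylinderbound1} on $S'$ then produces
\begin{equation*}
G(A, B, Q_R'(0) \times (-L/2, L/2)) \geq G(A, B, S' \times (-L/2, L/2)) \geq (1 - 3\lambda - 6\varepsilon)\, G(A_\infty, B_\infty, S' \times \R),
\end{equation*}
with $A_\infty = \R^{N-1} \times (d/2, l/2)$ and $B_\infty = \R^{N-1} \times (-l/2, -d/2)$.

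Writing $E := Q_R'(0) \setminus S'$ and splitting $G(A_\infty, B_\infty, Q_R'(0) \times \R) = G(A_\infty, B_\infty, S' \times \R) + \Delta$ (where $\Delta$ collects pairs with at least one horizontal projection in $E$), Lemma~\ref{cylinderplane} computes $G(A_\infty \cap (Q_R'(0) \times \R), B_\infty)$ and Lemma~\ref{cylindercomplement} bounds the interaction of the cube cylinder with $B_\infty$ outside $Q_R'(0)$ by $C(N) R^{N-1}(1 - \ln(l/2))$, leaving
\begin{equation*}
G(A_\infty, B_\infty, Q_R'(0) \times \R) \geq R^{N-1}\omega_{n-1} \ln\bigl((l+d)^2/(ld)\bigr) - C(N) R^{N-1}(1 - \ln(l/2)),
\end{equation*}
which via $\ln((l+d)^2/(ld)) \geq \ln(l/d) = \ln(\varepsilon r/2) - \ln(1/(2\varepsilon\lambda_\varepsilon))$ already gives the main log term and the residual term of the conclusion (note $\ln(l/2) = \ln(\varepsilon r/2)$).

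The remaining task is to estimate $\Delta$. Since $A_\infty$ and $B_\infty$ are horizontal slabs, Fubini together with the scalar identity
\begin{equation*}
\int_{\R^{N-1}} \bigl(|y' - x'|^2 + h^2\bigr)^{-(N+1)/2}\, dy' = \xi(N-1)\, h^{-2}, \qquad \xi(N-1) := \int_{\R^{N-1}} (1 + |z|^2)^{-(N+1)/2}\, dz,
\end{equation*}
obtained by the substitution $y' - x' = h z$, yields $\Delta \leq 2|E|\, \xi(N-1) \ln((l+d)^2/(ld))$, and combining with $|E| \leq (2\lambda + c\varepsilon) R^{N-1}$ produces the factor $1 - 2\xi(N-1)(2\lambda + c\varepsilon)$ in front of the main log term. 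The main obstacle I anticipate is tracking the precise constants in the stated conclusion, in particular the extra additive $-2\varepsilon$ in the prefactor, which I expect to come from a further truncation of the ranges of the integration variables $u, v$ in the proof of Lemma~\ref{correction_cylinderbound1} (needed to guarantee $l < 3r$ and that the indices $i, i + 2j$ fit inside the admissible window); under $\varepsilon < 1/3$ and $\varepsilon \log \lambda_\varepsilon \to k > 0$ all these corrections will indeed be of order $\varepsilon$, and assembling the three bounds above gives the stated estimate.
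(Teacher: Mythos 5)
Your proposal follows the same overall route as the paper: the same Chebyshev reduction to a good set $S'$ defined by exactly the three sectional conditions of Lemma \ref{correction_cylinderbound1} with $l=\varepsilon r$, $d=1/(\varepsilon\lambda_\varepsilon)$, $\Lambda=1-\lambda-2\varepsilon$, the same application of that lemma, and the same use of Lemma \ref{cylinderplane} minus the Lemma \ref{cylindercomplement} correction for the main term. The one genuine difference is how the loss $\Delta$ coming from $Q_R'(0)\setminus S'$ is controlled: the paper covers $Q_R'(0)\setminus S'$ by $\xi(N-1)$ families of disjoint cubes via the Besicovitch covering theorem and applies Lemma \ref{cylinderplane} to each cube, whereas you compute the slab--slab interaction directly by Fubini and horizontal translation invariance, getting $G(A_\infty\cap E\times\R,\,B_\infty)\leq \h^{N-1}(E)\,\xi(N-1)\,(\text{log factor})$ with $\xi(N-1)=\int_{\R^{N-1}}(1+|z|^2)^{-(N+1)/2}\,dz$. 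Your version is cleaner and arguably preferable: it avoids the covering theorem entirely and, in particular, makes the additive $-2\varepsilon$ in the prefactor superfluous (you end up with a slightly stronger constant). Note that your guess about the origin of that $-2\varepsilon$ is off --- it does not come from truncating $u,v$ in Lemma \ref{correction_cylinderbound1}, but from the $+\varepsilon$ slack in the paper's Besicovitch covering estimate, a step your argument simply does not need. The only caveat is bookkeeping: to present the loss as the multiplicative factor $1-2\xi(N-1)(2\lambda+c\varepsilon)$ rather than as a subtracted term $2\xi(N-1)(2\lambda+c\varepsilon)R^{N-1}(\text{log factor})$ one implicitly compares $\xi(N-1)$ against $\omega_{n-1}$; since both are declared dimensional constants this is harmless, but worth a sentence.
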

\begin{proof}
Let us set
\begin{align*}
	S' := &\Big\{ x' \in Q_R'(0) \,:\, \h^1\left( B_x' \cap (-L/2,-L/2+r) \right) > (1-\lambda)r \text{ and }
		\h^1\left( A_x' \cap (L/2-r,L/2) \right) > (1-\lambda)r \\
		& \text{ and } \h^1\left( (A \cup B)^C_{x'} \right) <
		\frac{1-3\lambda -6\varepsilon}{2 \varepsilon \lambda_\varepsilon} \Big\} \,.
\end{align*}
From our assertions it follows immediately that
\begin{equation*}
	\frac{\h^{N-1}(S')}{\h^{N-1}(Q_R'(0))} > 1 - \lambda -\lambda - c\varepsilon
	= 1 - 2\lambda -c\varepsilon \,.
\end{equation*}
We can now apply Lemma \ref{correction_cylinderbound1} for $l := \varepsilon r$ and
$d = \frac{1}{\varepsilon \lambda_\varepsilon}$ (this means that
$A_\infty = \R^{N-1} \times  \left(  \frac{1}{2\varepsilon \lambda_\varepsilon}, 
\frac{\varepsilon r}{2} \right)$
and
$B_\infty = \R^{N-1} \times  \left( -\frac{\varepsilon r}{2},
 -\frac{1}{2\varepsilon \lambda_\varepsilon} \right)$ )
 to see that
\begin{align*}
	&G \left( A,B,Q_R'(0) \times (-L/2,L/2) \right) \\
	&\quad \geq G \left( A,B,S' \times (-L/2,L/2) \right) \\
	&\quad \geq (3 (1-\lambda -2 \varepsilon) - 2) \, G\left( A_\infty, B_\infty, S' \times \R \right) \\
	&\quad \geq (1-3\lambda -6 \varepsilon)  \left( G \left(A_\infty, B_\infty, Q_R'(0) \times \R \right)
		- G \left(A_\infty \cap {S'}^C \times \R , B_\infty \right) - G \left(A_\infty, B_\infty \cap {S'}^C 
		\times \R\right) \right) \\
	&\quad = (1-3\lambda -6 \varepsilon) \left( G\left(A_\infty, B_\infty, Q_R'(0) \times \R \right)
		- 2G\left(A_\infty \cap {S'}^C \times \R , B_\infty \right)\right) \,.
\end{align*}
We claim that
\begin{equation*}
	G \left(A_\infty \cap {S'}^C \times \R , B_\infty \right) \leq \left( 
		\frac{\xi(N-1) \h^{N-1}({S'}^C) + \varepsilon}
		{\h^{N-1}(Q_R'(0))} \right) G\left(A_\infty, B_\infty, Q_R'(0) \times \R \right) \,.
\end{equation*}
By the Besicovitch covering theorem we can cover $Q_R'(0) \setminus S'$
by $\xi(N-1)$ countable families $(Q_{l,h}')_h$ of pairwise disjoint cubes of side length $r(h,l)$,
$l = 1,\dots,\xi(N-1)$, such that
\begin{equation*}
	\sum_{l = 1}^{\xi(N-1)} \sum_h \h^{N-1}(Q_{l,h}') \leq \xi(N-1) \h^{N-1}(Q_R'(0) \setminus S') 
		+ \varepsilon
\end{equation*}
holds. Now the claim follows by an application of Lemma \ref{cylinderplane} to the pairs
$(A_\infty \cap Q_{l,h}' \times \R, B_\infty)$.
We can use Lemma \ref{cylinderplane} and Lemma \ref{cylindercomplement}
to conclude:
\begin{align*}
	&G \left( A,B,Q_R'(0) \times (-L/2,L/2) \right) \\
	&\quad \geq (1-3\lambda -6 \varepsilon) \bigg( (1-2\xi(N-1)(2\lambda +c\varepsilon) -2\varepsilon) \\
	 &\quad\quad R^{N-1}\omega_{n-1} 
		\left( 2 \ln\left( \frac{\varepsilon r + 1/(\varepsilon \lambda_\varepsilon)}{4} \right) - 
		\ln\left( \frac{\varepsilon r}{2} \right) - \ln\left( \frac{1}{2\varepsilon \lambda_\varepsilon} \right)
 		\right)\bigg) \\
	&\quad \quad - (1-3\lambda -6 \varepsilon) R^{N-1}C(N) \left( 1 -
		 \ln\left( \frac{\varepsilon r}{2} \right) \right) \,.
\end{align*}
Since $2\ln((\varepsilon r)/4) - \ln((\varepsilon r)/2) = \ln((\varepsilon r)/8)$, the assertion follows.
\end{proof}

\subsection{Energy bounds on cubes}
In order to relate the functional $G(A,B)$ with our energy functional $F_\varepsilon(u_\varepsilon)$,
we define a localized version of it:

Given a set $A \subset \R^N$, we write
\begin{equation*}
	F_\varepsilon (u_\varepsilon,A) := \lambda_\varepsilon \int_{\Omega \cap A} W(u_\varepsilon)  \, dx +
		\varepsilon \int_{(\Omega \cap A) \times (\Omega \cap A)}
		\frac{(u_\varepsilon(y) - u_\varepsilon(x))^2} {|y - x|^{N+1}} \, d(y,x)
\end{equation*}
Given a fixed $\delta \in \left( 0, (\beta - \alpha)/2 \right)$, we also define
\begin{align*}
	&A_\varepsilon := \{ x \,:\, u_\varepsilon(x) < \alpha + \delta \} \\
	&B_\varepsilon  := \{ x \,:\, u_\varepsilon(x) > \beta - \delta \} \,.
\end{align*}
We also set for any open set $A$ and $u \in L^1(A)$
\begin{equation*}
	F(u,A) := \left\{
	\begin{array}{lr}
		2(\beta - \alpha)^2\omega_{n-1} k \h^{N-1}\llcorner_A (S_u) \,, &u \in \BV(A, \{ \alpha, \beta \}) \\
		+\infty, &\text{otherwise.}
	\end{array} \right.
\end{equation*}

\begin{lem}[Energy lower bound on a special cube]\label{correction_boundaway0}
Given a family $(u_\varepsilon)$ such that $F_\varepsilon(u_\varepsilon) \leq C < \infty$ 
uniformly as $\varepsilon \rightarrow 0^+$, and given a cube $Q_R(x)$ such that
\begin{equation} \label {correction_temp2}
	|A_\varepsilon \cap Q_R(x)| > a |Q_R(x)| \quad \text{and} \quad |B_\varepsilon \cap Q_R(x)| > a |Q_R(x)| 
\end{equation}
for some $a >0$ and all $\varepsilon > 0$ sufficiently small, then there exists a constant
$C(a)$ (depending also on $N,\alpha,\beta,k$) such that for $\varepsilon$ sufficently small it holds
\begin{equation*}
	F_\varepsilon(u_\varepsilon, Q_R(x)) \geq C(a) R^{N-1}\,.
\end{equation*}
\end{lem}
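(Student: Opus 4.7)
The strategy combines two ingredients: the $W$-term in $F_\varepsilon$ forces the transition region $(A_\varepsilon \cup B_\varepsilon)^C \cap Q_R(x)$ to be negligibly small compared with $R^N$, and then Corollary \ref{correction_cylinderbound2} supplies the desired lower bound on the nonlocal part, provided we can locate inside $Q_R(x)$ a sub-cylinder of dimensions proportional to $R$ on which its high-density hypotheses hold.

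\emph{Step 1 (transition region).} Setting $m := \min_{[\alpha + \delta,\, \beta - \delta]} W > 0$, we have
\[
	m \lambda_\varepsilon \, |(A_\varepsilon \cup B_\varepsilon)^C \cap Q_R(x)| \leq \lambda_\varepsilon \int_{Q_R(x)} W(u_\varepsilon)\, dy \leq F_\varepsilon(u_\varepsilon, Q_R(x)).
\]
If $F_\varepsilon(u_\varepsilon, Q_R(x)) \geq R^{N-1}$ there is nothing to prove, so we may assume the opposite; this yields $|(A_\varepsilon \cup B_\varepsilon)^C \cap Q_R(x)| \leq R^{N-1}/(m \lambda_\varepsilon)$. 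For any fixed $R$ this is $o(R^N)$ as $\varepsilon \to 0^+$ and in particular, for an appropriate choice of the constant $c$, fulfills the third hypothesis of Corollary \ref{correction_cylinderbound2}.

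\emph{Step 2 (selection of a sub-cylinder and conclusion).} Since by the hypothesis (\ref{correction_temp2}) together with Step 1 almost all of $Q_R(x)$ is partitioned between $A_\varepsilon$ and $B_\varepsilon$, each of mass at least $aR^N$, we select a coordinate direction $e$, a projected cube $Q_{R'}'(y') \subset Q_R'(x')$ with $R' \simeq R$, and heights $t_- < t_+$ such that the upper and lower slabs of thickness $r \simeq R$ of the cylinder $\mathcal{C} := Q_{R'}'(y') \times (t_-, t_+)$ (in direction $e$) contain, respectively, a fraction exceeding $1 - \lambda^2$ of $A_\varepsilon$ and of $B_\varepsilon$, for some $\lambda = \lambda(a) < 1/3$. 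This is the technical heart of the argument, to be obtained by slicing $Q_R(x)$ along each coordinate direction via Fubini and making a pigeonhole/centroid choice of direction. Applying Corollary \ref{correction_cylinderbound2} on $\mathcal{C}$ with $l = \varepsilon r$ and $d = 1/(\varepsilon \lambda_\varepsilon)$, together with the pointwise bound $(u_\varepsilon(y) - u_\varepsilon(x))^2 \geq (\beta - \alpha - 2\delta)^2$ whenever $x \in A_\varepsilon$ and $y \in B_\varepsilon$, then yields
\[
	F_\varepsilon(u_\varepsilon, Q_R(x)) \geq c(a)\, \varepsilon\, R^{N-1} \ln\bigl( \varepsilon^2 r \lambda_\varepsilon \bigr) - C(N)\, \varepsilon\, R^{N-1}.
\]
Since $\varepsilon \log \lambda_\varepsilon \to k \in (0, \infty)$ while $\varepsilon \log \varepsilon \to 0$ as $\varepsilon \to 0^+$, the right-hand side converges to $c(a)\, k\, R^{N-1}$, concluding the proof.

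\emph{Main obstacle.} The crux is Step 2: without finer structural information on $u_\varepsilon$, producing a sub-cylinder of dimensions comparable to $R$ on which each of the two slabs has density close to one is not straightforward. The cleanest workaround is a dichotomy: either such a sub-cylinder exists and Corollary \ref{correction_cylinderbound2} applies as above, or $A_\varepsilon$ and $B_\varepsilon$ interpenetrate at all intermediate scales, in which case the singular kernel $|y-x|^{-(N+1)}$ together with the mass lower bounds already forces $\varepsilon\, G(A_\varepsilon \cap Q_R(x), B_\varepsilon \cap Q_R(x)) \gtrsim R^{N-1}$ directly.
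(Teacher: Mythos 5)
Your Step 1 is fine and matches what the paper does implicitly (the potential term controls $|(A_\varepsilon\cup B_\varepsilon)^C\cap Q_R(x)|$ by $C/\lambda_\varepsilon$). The problem is Step 2, which you yourself flag as the "technical heart" and then do not prove. The existence of a single sub-cylinder of cross-section $R'\simeq R$ and slab thickness $r\simeq R$ whose top and bottom slabs have $A_\varepsilon$- resp. $B_\varepsilon$-density exceeding $1-\lambda^2$ simply need not hold: $A_\varepsilon$ and $B_\varepsilon$ could be mixed at an intermediate scale (say a checkerboard at scale $R/10$), so no macroscopic cylinder has near-full-density slabs. Your proposed dichotomy does not repair this, because the alternative branch ("interpenetration at all scales forces $\varepsilon\,G\gtrsim R^{N-1}$ directly") is not a precise statement and comes with no argument; in particular you would need $G(A_\varepsilon,B_\varepsilon,Q_R(x))\gtrsim R^{N-1}/\varepsilon$, and nothing in your write-up quantifies "interpenetration" in a way that yields this. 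As it stands the proof has a genuine gap exactly at the step where all the work lies.

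The paper's resolution is to abandon the macroscopic scale and work at scale $\varepsilon^2$. One partitions $Q_R(x)$ into $\varepsilon^2$-cubes $Q_i$ and classifies them: if both $|A_\varepsilon\cap Q_i|$ and $|B_\varepsilon\cap Q_i|$ exceed $\varepsilon^{2N+1/3}$ then $G(A_\varepsilon,B_\varepsilon,Q_i)\gtrsim\varepsilon^{2N-4/3}$, so the energy bound forces the total volume of such cubes to be $O(\varepsilon^{1/3})$; if neither $A_\varepsilon$ nor $B_\varepsilon$ has density close to $1$ in $Q_i$ and they do not interpenetrate, then $(A_\varepsilon\cup B_\varepsilon)^C$ has definite density there, which the potential term forbids except on a set of volume $O(1/\lambda_\varepsilon)$. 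Hence, up to a vanishing volume fraction, every $\varepsilon^2$-cube is "pure" (density of $A_\varepsilon$ or of $B_\varepsilon$ exceeds $\Lambda$), and by \eqref{correction_temp2} a pigeonhole over coordinate directions produces $\gtrsim c(a)(R/\varepsilon^2)^{N-1}$ disjoint thin cylinders $Q_j'\times(-R/2,R/2)$, each containing one pure-$A_\varepsilon$ and one pure-$B_\varepsilon$ cube. Corollary \ref{correction_cylinderbound2} is then applied with $R=r=\varepsilon^2$ in each thin cylinder; this yields $\varepsilon\,G\gtrsim\varepsilon^{2(N-1)}\omega_{n-1}k/2$ per cylinder because the logarithm it produces is $\ln(\varepsilon^3/2)-\ln(1/(2\varepsilon\lambda_\varepsilon))$ and $\varepsilon\ln(\varepsilon^3)\to 0$, so the tiny geometric scale costs nothing asymptotically. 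Summing over the cylinders gives $C(a)R^{N-1}$. This is precisely the quantitative version of your informal dichotomy, but it only closes because it is run at scale $\varepsilon^2$ rather than at scale $R$.
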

\begin{proof}
In order to simplify the notation we assume that $x = 0$.
Let us consider a partition of $\R^N$ into $\varepsilon^2$-cubes on an $\varepsilon^2$-lattice and call
$Q_i, i \in H$, the cubes which lie inside $Q_R(0)$. It holds that if
$|A_\varepsilon \cap Q_i| > \varepsilon^{2N + 1/3}$ and $|B_\varepsilon \cap Q_i| > \varepsilon^{2N + 1/3}$,
then
\begin{equation*}
	N^{(N+1)/2} G(A_\varepsilon, B_\varepsilon, Q_i) > \frac{1}{\varepsilon^{2N + 2}}
	\left( \varepsilon^{2N + 1/3} \right)^2 = \varepsilon^{2N - 4/3}
\end{equation*}
and from the energy bound $F_\varepsilon(u_\varepsilon) < C$,
since $\inf \{ W(t) \,:\, \alpha + \delta \leq t \leq \beta - \delta \} > 0$,
it follows that the volume of the union of all
such cubes is bounded by$\frac{C/\varepsilon}{\varepsilon^{-4/3}} = C \varepsilon^{1/3}$.
If instead it holds that
$|A_\varepsilon \cap Q_i| < \varepsilon^{2N + 1/3}$ and 
$|B_\varepsilon \cap Q_i| < \Lambda \varepsilon^{2N}$
for some $\Lambda \in (0, 1-\varepsilon^{1/3})$, then
\begin{equation*}
	|Q_i \setminus (A_\varepsilon \cup B_\varepsilon)| > \varepsilon^{2N} (1 - \Lambda - \varepsilon^{1/3})
\end{equation*}
and because from the energy bound $F_\varepsilon(u_\varepsilon) < C$ it follows that
$|Q_R(0) \setminus (A_\varepsilon \cup B_\varepsilon)| < \frac{C}{\lambda_\varepsilon}$, the volume of 
the union of all such cubes is bounded by $\frac{C / \lambda_\varepsilon}{1 - \Lambda - \varepsilon^{1/3}}$.
Because an analogous result holds with the roles of $A_\varepsilon$ and $B_\varepsilon$ interchanged,
the above estimates imply that the set
\begin{equation*}
	\mathbf{Q} := \left\{ Q_i \,:\, i \in H, \frac{|Q_i \cap A_\varepsilon|}{|Q_i|} > \Lambda 
		\text { or } \frac{|Q_i \cap B_\varepsilon|}{|Q_i|} > \Lambda\right\}
\end{equation*}
satisfies that $|Q_R(0) \setminus \cup \mathbf{Q}| \rightarrow 0$ as $\varepsilon \rightarrow 0^+$.
By (\ref{correction_temp2}) and assuming that $\Lambda$ is big enough, e.g. $\Lambda > 1 - a/2$,
we can then find a direction, which for notational convenience 
we assume to be $e_N$, and a constant $c(a)$ such that there exists an index set $H'$ and a collection
$\{Q_j' \,:\, j \in H'\}$ of
$\varepsilon^2$-cubes on an $\varepsilon^2$-lattice in $\R^{N-1}$ inside $Q_R'(0)$ such that
\begin{align*}
	&\h^{N-1}(\bigcup_{j \in H'}Q_j') > c(a) \h^{N-1}(Q_R'(0)) \quad
		 \text{and any of the cylinders } Q_j' \times (-R/2,R/2) \\
	&\quad \text{ contains both cubes of the types } \frac{|Q_i \cap A_\varepsilon|}{|Q_i|} > \Lambda
		\text{ and } \frac{|Q_i \cap B_\varepsilon|}{|Q_i|} > \Lambda \,.
\end{align*}
We can now apply Corollary \ref{correction_cylinderbound2} for $R = r = \varepsilon^2$,
$1-\lambda^2 = \Lambda$ (and $c = \frac{2C}{1-3\lambda -6\varepsilon}$) to see that for any $j \in H'$
\begin{align*}
	&\frac{\varepsilon}{\varepsilon^{2(N-1)}} G(A_\varepsilon, B_\varepsilon,Q_j' \times (-R/2,R/2)) \\
	&\quad \geq \varepsilon \omega_{n-1} \left(1 - 3 \sqrt{1 - \Lambda}  - 6\varepsilon \right)
		\left(1 - 2\xi(N-1) \left(2 \sqrt{1 - \Lambda} + c\varepsilon \right) - 2\varepsilon \right)\\
	&\quad\quad\left( \ln\left( \frac{\varepsilon^3}{8}\right) - \ln \left( \frac{1}{2 \varepsilon
		\lambda_\varepsilon} \right)
		\right) - \varepsilon C(N) \left( 1 - \ln\left( \frac{\varepsilon^3}{2}\right)  \right) \\
	&\quad \longrightarrow \omega_{n-1} \left(1 - 3 \sqrt{1 - \Lambda} \right)
		\left(1 - 4\xi(N-1) \sqrt{1 - \Lambda} \right) k \quad \text{as } \varepsilon \rightarrow 0^+ \,.
\end{align*}
Here we used that $\varepsilon \ln(\lambda_\varepsilon)$ converges to $k$ as $\varepsilon \rightarrow 0^+$,
and we may assume that the left hand side of the above inequality is at least $\omega_{n-1}/2 k$ for a suitable choice of 
$\Lambda$ and $\varepsilon$ small enough.
Then it follows for sufficently small $\varepsilon$ that
\begin{align*}
	F_\varepsilon(u_\varepsilon, Q_R(0)) &\geq \sum_{j \in H'} \varepsilon G(A_\varepsilon, B_\varepsilon,
		Q_j' \times (-R/2,R/2)) \\
	&\geq \sum_{j \in H'} \varepsilon^{2(N-1)} \frac{\omega_{n-1}k}{2}
		> c(a) \h^{N-1}(Q_R'(0))\frac{\omega_{n-1}k}{2} \,.
\end{align*}
\end{proof}

\begin{lem}[Energy upper bound for a special sequence]\label{recoverycube}
Given a cube $Q_R'(x)$, a number $l > 0$ and the family of functions
\begin{equation*}
	u_\varepsilon: Q_R'(x) \times (-l/2,l/2) \rightarrow \R \,,\, y \mapsto \left\{
	\begin{array}{lr}
		\beta, &y_N \in \left(-\frac{l}{2}, - \frac{\varepsilon}{2 \lambda_\varepsilon} \right)\\
		\beta + (\alpha - \beta) \frac{\lambda_\varepsilon}{\varepsilon} 
			\left( y_N + \frac{\varepsilon}{2 \lambda_\varepsilon} \right), 
		& y_N \in \left( -\frac{\varepsilon}{2 \lambda_\varepsilon},
			\frac{\varepsilon}{2 \lambda_\varepsilon} \right) \\
		\alpha, &y_N \in \left( \frac{\varepsilon}{2 \lambda_\varepsilon}, \frac{l}{2} \right)
	\end{array} \right. \,,
\end{equation*}
it holds that $u_\varepsilon$ converges in $ L^1 \left( Q_R'(x) \times (-l/2,l/2) \right)$ to $u$ as
$\varepsilon \rightarrow 0^+$, where
\begin{equation*}
	 u: Q_R'(x) \times (-l/2,l/2) \rightarrow \R \,,\, y \mapsto \left\{
		\begin{array}{lr}
		\beta, &y_N \in (-l/2,0) \\
		\alpha, &y_N \in (0,l/2)
	\end{array} \right.
\end{equation*}
and
\begin{equation*}
	\limsup_{\varepsilon \rightarrow 0^+} F_\varepsilon(u_\varepsilon, Q_R'(x) \times (-l/2,l/2)) \leq
		2(\beta - \alpha)^2 \omega_{n-1} k R^{N-1} \,.
\end{equation*}
\end{lem}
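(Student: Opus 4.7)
The $L^1$ convergence is immediate: $u_\varepsilon$ and $u$ agree off the transition strip $T_\varepsilon := Q_R'(x) \times (-\varepsilon/(2\lambda_\varepsilon), \varepsilon/(2\lambda_\varepsilon))$, both take values in $[\alpha,\beta]$, and $|T_\varepsilon| = R^{N-1}\varepsilon/\lambda_\varepsilon \to 0$ since $\varepsilon \ln \lambda_\varepsilon \to k > 0$. Writing $C_R := Q_R'(x) \times (-l/2, l/2)$, the potential piece of the energy is supported in $T_\varepsilon$ because $W(\alpha) = W(\beta) = 0$, and since $W$ is continuous hence bounded on $[\alpha,\beta]$,
\[
\lambda_\varepsilon \int_{C_R} W(u_\varepsilon)\,dx \leq \lambda_\varepsilon |T_\varepsilon| \max_{[\alpha,\beta]} W = R^{N-1} \varepsilon \max_{[\alpha,\beta]} W \to 0.
\]

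The heart of the proof is the nonlocal term. Set $A_\varepsilon := Q_R'(x) \times (\varepsilon/(2\lambda_\varepsilon), l/2)$ and $B_\varepsilon := Q_R'(x) \times (-l/2, -\varepsilon/(2\lambda_\varepsilon))$; on these sets $u_\varepsilon \equiv \alpha$ and $u_\varepsilon \equiv \beta$ respectively. The plan is to split $C_R \times C_R$ according to whether both entries of $(y,x)$ lie outside $T_\varepsilon$ or at least one entry lies in $T_\varepsilon$. In the first region the integrand vanishes except when one point is in $A_\varepsilon$ and the other in $B_\varepsilon$, contributing a total of $2(\beta-\alpha)^2 G(A_\varepsilon, B_\varepsilon)$. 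Since $B_\varepsilon \subset \mathbb{R}^{N-1} \times (-l/2, -\varepsilon/(2\lambda_\varepsilon)) =: B_\infty$, Lemma \ref{cylinderplane} applied with $d := \varepsilon/\lambda_\varepsilon$ yields
\[
G(A_\varepsilon, B_\varepsilon) \leq G(A_\varepsilon, B_\infty) = R^{N-1}\omega_{n-1}\ln\frac{(l+d)^2}{l\,d}.
\]
Multiplying by $2\varepsilon(\beta-\alpha)^2$ and noting that $\varepsilon \ln(l+d) \to 0$, $\varepsilon \ln l \to 0$, and $-\varepsilon \ln d = -\varepsilon \ln \varepsilon + \varepsilon \ln \lambda_\varepsilon \to k$, we obtain
\[
\limsup_{\varepsilon \to 0^+} 2\varepsilon(\beta-\alpha)^2 G(A_\varepsilon, B_\varepsilon) \leq 2(\beta-\alpha)^2 \omega_{n-1} k R^{N-1},
\]
which is exactly the target value.

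It remains to control pairs with at least one endpoint in $T_\varepsilon$. By symmetry we may assume $x \in T_\varepsilon$ and split on the scale $|y-x| \lessgtr d$. For close pairs, since $u_\varepsilon$ is affine on $T_\varepsilon$ and constant elsewhere, it is globally Lipschitz with constant $(\beta-\alpha)\lambda_\varepsilon/\varepsilon$, so the integrand is bounded by $(\beta-\alpha)^2(\lambda_\varepsilon/\varepsilon)^2 |y-x|^{1-N}$; using $\int_{|y-x|<d}|y-x|^{1-N}\,dy = C\,d$ and $|T_\varepsilon| = R^{N-1} d$ with $\lambda_\varepsilon d = \varepsilon$, the resulting correction is of order $\varepsilon (\beta-\alpha)^2 (\lambda_\varepsilon/\varepsilon)^2 |T_\varepsilon| d = C(\beta-\alpha)^2 R^{N-1}\varepsilon \to 0$. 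For far pairs, the crude estimate $(u_\varepsilon(y)-u_\varepsilon(x))^2 \leq (\beta-\alpha)^2$ together with $\int_{|y-x|\geq d}|y-x|^{-N-1}\,dy = C/d$ yields a correction of order $\varepsilon (\beta-\alpha)^2 |T_\varepsilon|/d = C(\beta-\alpha)^2 R^{N-1}\varepsilon \to 0$. Summing all contributions produces the claimed $\limsup$ inequality. The only delicate step — in the sense of pinning down the correct constant — is the asymptotic evaluation of $\varepsilon \ln((l+d)^2/(l\,d))$, which is precisely where the geometric factor $\omega_{n-1}$ from Lemma \ref{cylinderplane} couples with the calibration $\varepsilon \ln \lambda_\varepsilon \to k$ to yield the surface-tension constant.
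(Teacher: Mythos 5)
Your proof is correct and follows essentially the same strategy as the paper's: the potential term is killed by the smallness of the transition strip, the main contribution is the interaction of the two plateau regions estimated via Lemma \ref{cylinderplane} with $d=\varepsilon/\lambda_\varepsilon$ and the calibration $\varepsilon\ln\lambda_\varepsilon\to k$, and the remainder (pairs meeting the strip) is controlled by the Lipschitz bound at short range and the crude bound $(\beta-\alpha)^2$ at long range. The only difference is cosmetic — you split the remainder by $|y-x|\lessgtr d$ whereas the paper splits by whether the second point lies in $(-d,d)$ — and both yield corrections that vanish after multiplication by $\varepsilon$.
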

\begin{proof}
 The first assertion is obvious. Let us prove the second assertion.
 To simplify the notation, we assume that $x = 0$ and set
 $d(\varepsilon) := \frac{\varepsilon}{\lambda_\varepsilon}$.
We start with the preliminary estimates
\footnote{In the first inequality we used the fact that $|u_\varepsilon (y) - u_\varepsilon(x)|
\leq (\alpha -\beta) \frac{\lambda_\varepsilon}{\varepsilon}|y-x|$ and the second one follows by the change
of coordinates $y-x \mapsto y$ and the inclusion $Q_R'(x) \subset Q_{2R}'(0)$ together with the
fact that $|y| \geq \max \{|y_N|, |y'|\}$. }
\begin{align*}
	& \int_{\left( Q_R'(0) \times (-d(\varepsilon),d(\varepsilon))\right)
		\times \left(Q_R'(0) \times (-d(\varepsilon),d(\varepsilon))\right)} 
		\frac{\left(u_\varepsilon(y) - u_\varepsilon(x) \right)^2}{|y-x|^{N+1}} \,d(y,x) \\
	&\quad \leq (\beta -\alpha)^2 \frac{\lambda_\varepsilon^2}{\varepsilon^2} 
		\int_{\left( Q_R'(0) \times (-d(\varepsilon),d(\varepsilon))\right) \times \left(Q_R'(0)
		\times (-d(\varepsilon),d(\varepsilon))\right)}
		\frac{\left(y_N -x_N \right)^2}{|y-x|^{N+1}} \,d(y,x) \\
	&\quad \leq  (\beta -\alpha)^2 \frac{\lambda_\varepsilon^2}{\varepsilon^2}
		|Q_R'(0) \times (-d(\varepsilon),d(\varepsilon))| \int_{Q_{2R}'(0)
		\times (-2d(\varepsilon),2d(\varepsilon))}
		\frac{|y_N|^2}{\max \left\{ |y_N|^{N+1}, |y'|^{N+1} \right\} } \,dy \\
	&\quad \leq 2 (\beta -\alpha)^2 R^{N-1} \frac{\lambda_\varepsilon}{\varepsilon} \left(
		\int_{Q_{2R}'(0) \times (-2d(\varepsilon),2d(\varepsilon))
		\cap \{ y_N > |y'| \}} \frac{1}{|y_N|^{N-1}} +
		\int_{Q_{2R}'(0) \times (-2d(\varepsilon),2d(\varepsilon))
		\cap \{ y_N < |y'| \}} \frac{1}{|y'|^{N-1}} \right) \\
	&\quad \leq C (\beta -\alpha)^2 R^{2(N-1)}
\end{align*}
and
\footnote{Again, we use the change of coordinates $y-x \mapsto y$ and in addition the fact that
$\left(Q_R'(0) \times (-l/2,-d(\varepsilon)) \cup (d(\varepsilon),l/2) \right) -x \subset
Q_{2R}'(0) \times (-l/2,l/2) \setminus B_{d(\varepsilon)/2}(0)$ to obatian the first inequality.}
\begin{align*}
	& \int_{\left( Q_R'(0) \times (-d(\varepsilon)/2,d(\varepsilon)/2)\right) \times \left(Q_R'(0)
		\times (-l/2,-d(\varepsilon)) \cup (d(\varepsilon),l/2) \right)}
		\frac{\left(u_\varepsilon(y) - u_\varepsilon(x) \right)^2}{|y-x|^{N+1}} \,d(y,x) \\
	&\quad \leq (\beta - \alpha)^2 \int_{Q_R'(0) \times (-d(\varepsilon)/2,d(\varepsilon)/2)}
		\int_{Q_{2R}'(0) \times (-l/2,l/2)
		\setminus B_{d(\varepsilon)/2} (0)}
		\frac{1}{|y|^{N+1}} \,dy \,dx \\
	&\quad \leq C (\beta-\alpha)^2 R^{N-1} \,.
\end{align*}
Therefore we have
\footnote{We use in the first equality the two preliminary estimates above 
and the fact that the value of the double
integral on $\left( Q_R'(0) \times (-l/2,-d(\varepsilon)/2)\right) \times \left( Q_R'(0)
\times (d(\varepsilon)/2,l/2)\right)$ equals its value on $\left( Q_R'(0) \times (d(\varepsilon)/2,l/2)\right)
\times \left( Q_R'(0) \times (-l/2,-d(\varepsilon)/2)\right)$. From Lemma \ref{cylinderplane} we obtain
the inequality.}
\begin{align*}
	&\limsup_{\varepsilon \rightarrow 0^+} \, \varepsilon
		\int_{\left(Q_R'(0) \times (-l/2,l/2)\right) \times \left(Q_R'(0) \times (-l/2,l/2)\right)}
		\frac{\left(u_\varepsilon(y) - u_\varepsilon(x) \right)^2}{|y-x|^{N+1}} \,d(y,x) \\
	&\quad = \limsup_{\varepsilon \rightarrow 0^+}\,  2\varepsilon
		\int_{\left( Q_R'(0) \times (-l/2,-d(\varepsilon)/2)\right)
		\times \left( Q_R'(0) \times (d(\varepsilon)/2,l/2)\right) }
		\frac{\left(u_\varepsilon(y) - u_\varepsilon(x) \right)^2}{|y-x|^{N+1}} \,d(y,x) \\
	&\quad =  \limsup_{\varepsilon \rightarrow 0^+}\,  2\varepsilon (\beta-\alpha)^2
		G \left(\left( Q_R'(0) \times (-l/2,-d(\varepsilon)/2)\right),
		\left( Q_R'(0) \times (d(\varepsilon)/2,l/2)\right)\right) \\
	&\quad \leq \limsup_{\varepsilon \rightarrow 0^+}\,  2\varepsilon (\beta-\alpha)^2 R^{N-1} \omega_{n-1}
		\left( 2\ln((l+d(\varepsilon))/4) -\ln(l/2) -\ln(d(\varepsilon)/2) \right) \\
	&\quad = 2(\beta-\alpha)^2 \omega_{n-1} k R^{N-1}\,.
\end{align*}
Since
\begin{equation*}
	\limsup_{\varepsilon \rightarrow 0^+} \lambda_\varepsilon \int_{Q_R'(0) \times (-l/2,l/2)} W(u_\varepsilon) \,dx
		\leq \limsup_{\varepsilon \rightarrow 0^+} \lambda_\varepsilon
		\max\{W(t) \,:\, t \in (\alpha,\beta)\} R^{N-1} d = 0 \,,
\end{equation*}
the assertion follows.
\end{proof}

\subsection{Compactness}
In order to prove compactness, we need the following approximation Lemma.
We define for a given $r>0$ the set of cubes with mid-point on a lattice
\begin{equation*}
	\mathbf{Q}_r^0 := \left\{ Q_r(rh) \,:\, h \in \mathbb Z^N \right\} \quad \text{and} \quad
		\mathbf{Q}_r^i := \left\{ Q_r\left( r(h + e_i/2) \right) \,:\, h \in \mathbb Z^N \right\} \,,
		i = 1,\dots,N \,.
\end{equation*}

\begin{lem} \label{finiteperimeter}
Given a measurable set $A \subset \R^N$ such that for some $a \in (0,1/4)$ it holds that
\begin{equation*}
	\sharp \left\{ Q \in \mathbf Q_r^i \,:\, \frac{|A \cap Q|}{|Q|}
		\in (a, 1-a) \right\} < \frac{C}{r^{N-1} } \,,\, i = 0,\dots,N
\end{equation*}
as $r \rightarrow 0^+$ for some constant $C < +\infty$, then $A$ has finite perimeter.
\end{lem}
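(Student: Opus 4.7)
The plan is to construct, at each scale $r > 0$, a piecewise-constant $\{0,1\}$-valued approximation of $\chi_A$ whose total variation stays uniformly bounded in $r$, and then conclude via $L^1_{\mathrm{loc}}$-lower semicontinuity of the $\BV$ seminorm. Concretely, define $u_r \colon \R^N \to \{0,1\}$ by setting $u_r \equiv 1$ on cubes $Q \in \mathbf{Q}_r^0$ satisfying $|A \cap Q|/|Q| \ge 1/2$, and $u_r \equiv 0$ elsewhere. Its distributional derivative is a measure supported on the faces of $\mathbf{Q}_r^0$, and $|D u_r|(\R^N) = r^{N-1}\mathcal{F}_r$, where $\mathcal{F}_r$ counts the \emph{transition faces} (those shared by two adjacent cubes of $\mathbf{Q}_r^0$ across which $u_r$ jumps).

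The crux is the following geometric dichotomy for a transition face perpendicular to $e_i$, shared by cubes $Q_1, Q_2 \in \mathbf{Q}_r^0$: either (a) at least one of $Q_1, Q_2$ has density $|A \cap Q_j|/|Q_j|$ in $(a, 1-a)$, or (b) one of them has density $\ge 1-a$ and the other $\le a$. In case (b), the straddling cube $Q \in \mathbf{Q}_r^i$, which is half in $Q_1$ and half in $Q_2$, has density
\begin{equation*}
   \frac{|A \cap Q|}{|Q|} \in \left[\tfrac{1}{2} - a,\, \tfrac{1}{2} + a\right] \subset (a, 1-a),
\end{equation*}
where the inclusion uses precisely the assumption $a < 1/4$; so $Q$ is a mixed cube in $\mathbf{Q}_r^i$. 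By the hypothesis, case (a) contributes at most $2 C /r^{N-1}$ faces per coordinate direction $e_i$ (each mixed cube of $\mathbf{Q}_r^0$ has two faces perpendicular to $e_i$), and case (b) contributes at most $C/r^{N-1}$ per direction. Summing over $i = 1, \dots, N$ gives $\mathcal{F}_r \le 3NC/r^{N-1}$, hence $|D u_r|(\R^N) \le 3 N C$ uniformly in $r$.

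For a.e.\ $x \in \R^N$ the density of $A$ in the cube of $\mathbf{Q}_r^0$ containing $x$ converges to $\chi_A(x)$ as $r \to 0^+$ by Lebesgue's differentiation theorem, so $u_r \to \chi_A$ a.e., and the uniform bound $|u_r| \le 1$ upgrades this to $L^1_{\mathrm{loc}}$-convergence by dominated convergence. Lower semicontinuity of the total variation then gives $|D \chi_A|(\R^N) \le 3 N C < \infty$, i.e.\ $A$ has finite perimeter.

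The main obstacle is the density computation for the straddling cube in case (b): this is what couples the $N+1$ shifted lattices appearing in the hypothesis and what fixes the threshold $a < 1/4$. Everything else is combinatorial bookkeeping on faces plus a standard density/lower-semicontinuity argument.
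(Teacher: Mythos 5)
Your proposal is correct and follows essentially the same strategy as the paper: approximate $A$ by a union of lattice cubes of high density, observe that each boundary face of this approximation is accounted for either by a mixed cube of $\mathbf Q_r^0$ or (via the straddling cube, using $a<1/4$) by a mixed cube of some shifted lattice $\mathbf Q_r^i$, deduce a uniform perimeter bound, and conclude by lower semicontinuity. The only (harmless) differences are your density threshold $1/2$ in place of the paper's $1-a$ and your cleaner justification of the $L^1_{\mathrm{loc}}$ convergence via Lebesgue differentiation.
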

\begin{proof}
Let us define
\begin{equation*}
	A_r :=  \bigcup \left\{ Q \in \mathbf Q_r^0 \,:\, \frac{|A \cap Q|}{|Q|} \geq 1-a \right\} .
\end{equation*}
The boundary of $A_r$ consists of faces $(F_h)_h$ which are common to pairs of cubes
$(Q_h, \tilde Q_h)$, where $Q_h \in \mathbf Q_r^0$ and $\tilde Q_h = Q_h \pm e_{i(h)}r $ for a suitable
$e_{i(h)}$ and which satisfy
\begin{equation*}
	\frac{|A \cap Q_h|}{|Q_h|} \geq 1-a \quad \text{and} 
		\quad \frac{|A \cap \tilde Q_h|}{|\tilde Q_h|} < 1-a \,.
\end{equation*}
We have $|A \cap \tilde Q_h|/|\tilde Q_h| \in [a,1-a)$ or $|A \cap \tilde Q_h|/|\tilde Q_h| \in [0,a)$
and in the second case it holds 
\begin{equation*}
	|A \cap Q_h \pm e_{i(h)} r/2|/|Q_h \pm e_{i(h)} r/2| \in [1/2 - a,1/2 + a) \subset [a,1-a) \,.
\end{equation*}
This means that every face $F_h$ can be mapped to a cube $Q \in \mathbf Q_r^i$ satisfying
$\frac{|A \cap Q|}{|Q|} \in [a, 1-a)$ and which has a face that is a shift of $F_h$ in a coordinate direction
with distance at most $r/2$. Therefore, by this mapping at most $2 \cdot 2^N$
faces $F_h$  are mapped to the same cube and
\begin{equation*}
	\sum_h \h^{N-1}(F_h) < 2 \cdot 2^N (N+1) \frac{C}{r^{N-1}}  \h^{N-1}(F_h) \leq C \,.
\end{equation*}
We have shown that the family $(A_r)_r$ has uniformly bounded perimeter. Since
\begin{equation*}
	\left| \left\{ Q \in \mathbf Q_r^i \,:\, \frac{|A \cap Q|}{|Q|}
		\in (a, 1-a) \right\}\right|  \longrightarrow 0 \quad  \text{as } r \rightarrow 0^+
\end{equation*}
holds, $(A_r)_r$ approximates $A$ in measure as $r \rightarrow 0^+$,
and the assertion follows by the lower semicontinuity of the perimeter.
\end{proof}

\begin{prop}[Compactness] \label{compactness}
Given sequences $(\varepsilon_h)_h$ and $(u_{\varepsilon_h})_h$ such that $\varepsilon_h \in \R$ satisfy
$\varepsilon_h \rightarrow 0^+$ and $u_{\varepsilon_h} \in L^1(\Omega)$ satisfy
$F_{\varepsilon_h}(u_{\varepsilon_h}) \leq C$ 
for some constant $C \in (0,\infty)$ as $h \rightarrow \infty$, there exists a function
$u \in \BV(\Omega, \{\alpha,\beta\})$ such that a up to subsequences $(u_{\varepsilon_h})_h$
converges to $u$ in $L^1(\Omega)$ as $h \rightarrow \infty$.
\end{prop}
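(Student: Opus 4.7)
The plan is to combine the mixed-cube energy lower bound of Lemma~\ref{correction_boundaway0} with the finite-perimeter criterion of Lemma~\ref{finiteperimeter} through weak-$*$ compactness and a Lebesgue-differentiation argument. First, I would exploit the potential term. Since $W \ge c(\delta) > 0$ on $[\alpha + \delta, \beta - \delta]$ and $W$ has at least linear growth at infinity, the bound $\lambda_{\varepsilon_h} \int_\Omega W(u_{\varepsilon_h}) \le C$ yields both $|\Omega \setminus (A_{\varepsilon_h} \cup B_{\varepsilon_h})| = O(1/\lambda_{\varepsilon_h}) \to 0$ and a uniform $L^1$-bound on $u_{\varepsilon_h}$; moreover, a direct estimate gives
\[
\|u_{\varepsilon_h} - \alpha \chi_{A_{\varepsilon_h}} - \beta \chi_{B_{\varepsilon_h}}\|_{L^1(\Omega)} \le 2\delta|\Omega| + o_h(1).
\]
Hence, modulo letting $\delta \to 0^+$ at the very end, both the $L^1$-compactness of $(u_{\varepsilon_h})$ and the identification of the limit reduce to those of $(\chi_{B_{\varepsilon_h}})$.

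By weak-$*$ compactness in $L^\infty(\Omega)$, up to a subsequence $\chi_{B_{\varepsilon_h}}$ converges weak-$*$ to some $v : \Omega \to [0,1]$, and I claim $v \in \{0,1\}$ almost everywhere. If not, $|E| > 0$ for $E := \{v \in (a, 1-a)\}$ and some $a \in (0, 1/2)$. By Lebesgue differentiation, for $k$ large the subset of $E$ consisting of points $x$ for which $\tfrac{1}{|Q_r(x)|} \int_{Q_r(x)} v \in (a/2, 1-a/2)$ for every $r \le 1/k$ with $Q_r(x) \subset \Omega$ has measure at least $|E|/2$; for any such $r$, a Vitali cover produces pairwise disjoint cubes $Q_r(x_1), \ldots, Q_r(x_M) \subset \Omega$ of side $r$, with $M \ge c|E|\, r^{-N}$ and fractional $v$-average on each. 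Weak convergence on each cube together with $|\Omega \setminus (A_{\varepsilon_h} \cup B_{\varepsilon_h})| \to 0$ then gives, for $h$ large, the uniform bound $\min\bigl(|A_{\varepsilon_h} \cap Q_r(x_j)|,\, |B_{\varepsilon_h} \cap Q_r(x_j)|\bigr) \ge (a/4)|Q_r(x_j)|$ for all $j$, so Lemma~\ref{correction_boundaway0} yields
\[
F_{\varepsilon_h}(u_{\varepsilon_h}) \ge \sum_{j=1}^M F_{\varepsilon_h}(u_{\varepsilon_h}, Q_r(x_j)) \ge M\, C(a/4)\, r^{N-1} \ge c'(|E|,a)/r,
\]
which contradicts the uniform energy bound upon choosing $r$ small enough. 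Therefore $v = \chi_B$ a.e.~for some measurable $B \subset \Omega$.

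The upgrade to strong $L^1$-convergence $\chi_{B_{\varepsilon_h}} \to \chi_B$ then follows from the weak $L^2$-convergence combined with the identity $\|\chi_{B_{\varepsilon_h}}\|_{L^2}^2 = |B_{\varepsilon_h}| \to |B| = \|\chi_B\|_{L^2}^2$; together with $\chi_{A_{\varepsilon_h}} \to \chi_{\Omega \setminus B}$ in $L^1$ (from disjointness and the vanishing of the bad set), substituting into the first paragraph and sending $\delta \to 0^+$ gives $u_{\varepsilon_h} \to \alpha \chi_{\Omega \setminus B} + \beta \chi_B$ in $L^1(\Omega)$. Finally, $B$ has finite perimeter by Lemma~\ref{finiteperimeter}: for each scale $r$ and each $i \in \{0, \ldots, N\}$, the mixed-cube bound from Lemma~\ref{correction_boundaway0} (applied uniformly in $h$) limits the number of cubes in $\mathbf{Q}_r^i$ with $\min\bigl(|A_{\varepsilon_h} \cap Q|,\, |B_{\varepsilon_h} \cap Q|\bigr) \ge |Q|/4$ by $C/r^{N-1}$, and passing to the $L^1$-limit transports this into the same bound for fractional-density cubes of $B$, verifying the hypothesis of Lemma~\ref{finiteperimeter}. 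The subtle point of the plan is the fractionality contradiction in the second paragraph, which hinges on the optimal $r^{N-1}$ scaling of the mixed-cube energy bound.
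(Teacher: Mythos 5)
Your proposal is correct and follows essentially the same route as the paper's proof: reduce to the indicator functions $\chi_{A_\varepsilon},\chi_{B_\varepsilon}$ via the potential term, rule out fractional weak-$*$ limits by combining Lemma \ref{correction_boundaway0} with a Vitali covering (the $M\,r^{N-1}\sim r^{-1}$ blow-up), and obtain finite perimeter of the limit set through Lemma \ref{finiteperimeter}. The only cosmetic differences are that the paper phrases the weak-$*$ limit in terms of Young measures and uses Vitali cubes of variable radii, while your $L^2$-norm argument for upgrading weak to strong convergence is a slightly cleaner version of the paper's closing remark.
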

\begin{proof}
In order to simplify the notation, we write $\varepsilon$ instead of $\varepsilon_h$ and do not
relabel subsequences. Using the energy bound we have
\begin{equation*}
	\int_\Omega W(u_\varepsilon) \,dx \leq \frac{C}{\lambda_\varepsilon}
\end{equation*}
tending to zero as $\varepsilon \rightarrow 0^+$. Because $\Omega$ is bounded and $W$ has at least
linear growth at infinity this imples that the norms $\|u_\varepsilon\|_{L^1(\Omega)}$ are uniformly bounded.
Therefore, up to subsequences, $u_\varepsilon$ converges weakly$\ast$ to a measure with Young
measure $\nu_x$ which is a probability measure for a.e. $x \in \Omega$. Moreover for a.e. $x$,
\begin{equation*}
	\int_{\R} W(t) \,d \nu_x(t) = 0 \,,
\end{equation*}
and it exists a function $\theta: \Omega \rightarrow [0,1]$ such that $\nu_x = \theta(x) \delta_\alpha + 
(1- \theta(x)) \delta_\beta$. We define the $L^1(\Omega)$-function
\begin{equation*}
	u:\Omega \rightarrow \R \,,\, x \mapsto \theta(x) \alpha + (1-\theta(x)) \beta
\end{equation*}
and show that $u \in \BV(\Omega, \{ \alpha, \beta \})$:
Given any constant $c>0$, again using the energy bound, the measure of 
$\Omega \setminus \left( \{ u_\varepsilon \in (\alpha -c, \alpha + c) \cup (\beta -c, \beta + c) \} \right)$
tends to zero as $\varepsilon \rightarrow 0^+$ and $u_\varepsilon$ and the truncated function
$(u_\varepsilon \wedge M) \vee -M$, for a sufficiently large constant $M$, are asymptotically
equivalent in $L^1(\Omega)$. From this we obtain that $u_\varepsilon$ and
\begin{equation*}
	v_\varepsilon := \alpha \chi_{A_\varepsilon} + \beta \chi_{\Omega \setminus A_\varepsilon}
\end{equation*}
are asymptotically equivalent in $L^1(\Omega)$. This means that
\begin{equation} \label{temp4}
	\chi_{A _\varepsilon } \weak* \theta \,.
\end{equation}
Let us note that for any $a \in (0,1/6)$ and for  a.e. point $x \in \{ 3a < \theta < 1-3a \}$, by (\ref{temp4}),
we find a radius $r(x) > 0$ such that for all $r < r(x)$ and $\varepsilon = \varepsilon(r)$ sufficiently small
\begin{equation*}
	\frac{\int_{Q_r(x)} \theta \,dy}{|Q_{r}(x)|} \in (2a,1-2a) \quad\text{and}\quad
		\frac{|Q_{r}(x) \cap A_\varepsilon|}{|Q_{r}(x)|} \in (a,1-a) \,.
\end{equation*}

Therefore, if $\{ 3a < \theta < 1-3a \}$ had positive Lebesgue measure, by Vitali covering theorem,
for any $r > 0$, there would exist a family of pairwise disjoint cubes $\left( Q_{r(h)}(x_h) \right)_h $
with $r(h) \leq r$, $|Q_{r(h)}(x_h) \cap A_\varepsilon|/|Q_{r(h)}(x_h)| \in (a,1-a)$ 
and Lebesgue measure  $|\cup_h Q_{r(h)}(x_h)|$ bounded away from zero
as $r \rightarrow 0^+$. This would contradict the fact that by
Lemma \ref{correction_boundaway0} $F_\varepsilon(u_\varepsilon, Q_{r(h)}(x_h)) \geq Cr(h)^{N-1}$
and the equiboundedness of the energies $F_\varepsilon(u_\varepsilon)$; i.e.,
we have shown that there exists a characteristic function $\chi_A$ such that $\theta = \chi_A$ in 
$L^1(\Omega)$ and it remains to show that $A$ has finite perimeter:
let us fix $a \in (0,1/8)$ and consider
\begin{equation*}
	\mathbf{S}_i :=\left\{ Q \in \mathbf Q_r^i \,:\, \frac{|A \cap Q|}{|Q|} \in (2a, 1-2a) \right\}
		\,,\, i = 0,\dots,N \,.
\end{equation*}
Because of (\ref{temp4}) (and because $\mathbf S_i$ is a finite set), for all $\varepsilon$ sufficently small 
\begin{equation*}
	\mathbf{S}_i  \subset \left\{ Q \in \mathbf Q_r^i \,:\,
		\frac{|A_\varepsilon \cap Q|}{|Q|} \in (a, 1-a) \right\}
		\,,\, i = 0,\dots,N \,.
\end{equation*}
By the energy bound and Lemma \ref{correction_boundaway0}, $\mathbf{S}_i$ have cardinality
bounded by $C / r^{N-1}$ and hence Lemma \ref{finiteperimeter} shows that $A$ is of finite perimeter
and $u \in \BV(\Omega, \{ \alpha,\beta \})$.

Finally, because $\|u_\varepsilon\|_{L^1(\Omega)} \rightarrow \|u\|_{L^1(\Omega)}$,
$u_\varepsilon$ converges up to subsequences also strongly in $L^1( \Omega)$ to $u$.
\end{proof}

\subsection{\(\Gamma\)-convergence}
In this section we state and prove our $\Gamma$-convergence result.
The proofs are similar to the ones given in \cite[Sections 4 and 5]{ab}.

\begin{prop}[$\gammaliminf$ inequality] \label{liminf}
Given sequences $(\varepsilon_h)_h$ and $(u_{\varepsilon_h})_h$ such that $\varepsilon_h \in \R$ satisfy
$\varepsilon_h \rightarrow 0^+$ and $u_{\varepsilon_h}$ converge to  a function $u$ in $L^1(\Omega)$
as $h \rightarrow +\infty$, it holds that
\begin{equation*}
	\liminf_{h \rightarrow \infty} F_{\varepsilon_h}(u_{\varepsilon_h}) \geq F(u) \,.
\end{equation*}
\end{prop}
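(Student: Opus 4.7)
The plan is to use a classical blow-up method combined with the localized lower bound of Corollary~\ref{correction_cylinderbound2}. We may assume $\liminf_h F_{\varepsilon_h}(u_{\varepsilon_h}) < \infty$ and pass to a subsequence where this is a limit; Proposition~\ref{compactness} gives $u \in \BV(\Omega, \{\alpha, \beta\})$. To each $\varepsilon_h$ we associate the positive Radon measure on $\Omega$
\begin{equation*}
	\mu_h := \lambda_{\varepsilon_h} W(u_{\varepsilon_h})\, \mathcal{L}^N + \varepsilon_h \left( \int_\Omega \frac{(u_{\varepsilon_h}(y) - u_{\varepsilon_h}(\cdot))^2}{|y - \cdot|^{N+1}} \, dy \right) \mathcal{L}^N,
\end{equation*}
whose total mass equals $F_{\varepsilon_h}(u_{\varepsilon_h})$. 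Passing to a further subsequence, $\mu_h \weak* \mu$ for a finite Radon measure $\mu$ on $\overline\Omega$, and the result will follow from
\begin{equation*}
	\mu \geq 2(\beta - \alpha)^2 \omega_{n-1} k \, \h^{N-1}\llcorner S_u.
\end{equation*}

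By differentiation of measures, this reduces to showing, for $\h^{N-1}$-a.e.\ $x_0 \in S_u$ with measure-theoretic unit normal $\nu$, the density bound
\begin{equation*}
	\liminf_{R \to 0^+} \frac{\mu(\overline{Q_R^\nu(x_0)})}{R^{N-1}} \geq 2(\beta - \alpha)^2 \omega_{n-1} k,
\end{equation*}
where $Q_R^\nu(x_0)$ denotes the open cube of side $R$ centered at $x_0$ with a face orthogonal to $\nu$. We pick such a point at which $u$ additionally has its standard $\BV$-blowup, so that $\{u = \alpha\}$ and $\{u = \beta\}$ fill the two halves of $Q_R^\nu(x_0)$ up to volume $R^N o_R(1)$. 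For radii $R$ with $\mu(\partial Q_R^\nu(x_0)) = 0$, weak-$\ast$ convergence of $\mu_h$ yields $\mu(\overline{Q_R^\nu(x_0)}) \geq \limsup_h F_{\varepsilon_h}(u_{\varepsilon_h}, Q_R^\nu(x_0))$.

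By rotation invariance of $F_\varepsilon$ and of $G$ we may assume $\nu = e_N$. Fix small $\lambda, \delta > 0$; the strong $L^1$-convergence $u_{\varepsilon_h} \to u$ together with the $\BV$-blowup at $x_0$ ensure that, for $R$ small enough and then $h$ large, the top and bottom slabs of $Q_R^\nu(x_0)$ of thickness $r = R/4$ satisfy the first two density hypotheses of Corollary~\ref{correction_cylinderbound2} with parameter $\lambda$; the third hypothesis follows from $|\Omega \setminus (A_{\varepsilon_h} \cup B_{\varepsilon_h})| \leq C / \lambda_{\varepsilon_h}$, itself a consequence of the energy bound and $\inf W > 0$ on $[\alpha + \delta, \beta - \delta]$. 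Using $(u_{\varepsilon_h}(y) - u_{\varepsilon_h}(x))^2 \geq (\beta - \alpha - 2\delta)^2$ on $A_{\varepsilon_h} \times B_{\varepsilon_h}$ and symmetrizing in $(x, y)$ gives
\begin{equation*}
	F_{\varepsilon_h}(u_{\varepsilon_h}, Q_R^\nu(x_0)) \geq 2\varepsilon_h(\beta - \alpha - 2\delta)^2\, G(A_{\varepsilon_h}, B_{\varepsilon_h}, Q_R^\nu(x_0)).
\end{equation*}
Multiplying the conclusion of Corollary~\ref{correction_cylinderbound2} by $\varepsilon_h$ produces a dominant term proportional to $\varepsilon_h R^{N-1} \omega_{n-1} \ln \lambda_{\varepsilon_h}$, which converges to $R^{N-1} \omega_{n-1} k$ by $\varepsilon \log \lambda_\varepsilon \to k$, while the remaining contributions vanish as $h \to \infty$. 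Sending first $h \to \infty$, then $\lambda, \delta \to 0^+$, and finally $R \to 0^+$ delivers the required density bound.

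The main obstacle is the coordination of the parameters $h, R, \lambda, \delta$ with the ``$\varepsilon$'' internal to Corollary~\ref{correction_cylinderbound2} (which we identify with $\varepsilon_h$): as $R$ shrinks, the slab thickness $r = R/4$ also shrinks, so the density conditions on the slabs---which hold for the limit sets $\{u = \alpha\}, \{u = \beta\}$ by the $\BV$-blowup---must be transferred to the $\varepsilon_h$-sublevel sets $A_{\varepsilon_h}, B_{\varepsilon_h}$ via $L^1$-convergence of $\chi_{A_{\varepsilon_h}}, \chi_{B_{\varepsilon_h}}$. The order ``$\lambda$ first, then $R$, then $h$'' makes this bookkeeping work, since each choice can be made uniformly in the subsequent parameters.
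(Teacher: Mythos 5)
Your proposal is correct and follows essentially the same route as the paper: the blow-up method with the energy-density measures $\mu_h \weak* \mu$, differentiation of $\mu$ with respect to $\h^{N-1}\llcorner S_u$ at a.e.\ jump point, transfer of the $\BV$-blowup density conditions to $A_{\varepsilon_h}, B_{\varepsilon_h}$, the symmetrized lower bound $F_{\varepsilon_h}(\cdot, Q_R^\nu(x_0)) \geq 2\varepsilon_h(\beta-\alpha-2\delta)^2 G(A_{\varepsilon_h},B_{\varepsilon_h},Q_R^\nu(x_0))$, and Corollary~\ref{correction_cylinderbound2} to produce the $\varepsilon_h \ln\lambda_{\varepsilon_h} \to k$ leading term. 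The only deviations are cosmetic (slab thickness $r=R/4$ versus the paper's $r=R/2$, and a slightly different phrasing of the parameter ordering, which as you note is the genuinely delicate point and is handled the same way).
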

\begin{proof}
Without loss of generality we may assume that
$\liminf_{h \rightarrow \infty} F_{\varepsilon_h}(u_{\varepsilon_h}) < \infty$ and therefore by
Proposition \ref{compactness} $u \in \BV \left( \Omega, \{ \alpha,  \beta \} \right)$.
In order to simplify the notation, we write $\varepsilon$ instead of $\varepsilon_h$ and do not relabel
subsequences. 
Up to resticting to a subsequence, we may assume
\begin{equation*}
	\liminf_{\varepsilon \rightarrow 0^+} F_\varepsilon(u_\varepsilon)
		= 	\lim_{\varepsilon \rightarrow 0^+} F_\varepsilon(u_\varepsilon) \,.
\end{equation*}

Let us set
\begin{equation*}
	g_\varepsilon(x) := \lambda_\varepsilon W(u_\varepsilon)(x) +
		\varepsilon \int_\Omega \frac{(u_\varepsilon(y) - u_\varepsilon(x))^2}
		{|y - x|^{N+1}} \, dy \,.
\end{equation*}
Then there exists a Radon measure $\mu$ on $\Omega$ such that, again up to subsequences,
\begin{equation*}
	g_\varepsilon \mathcal{L}^N \weak* \mu \,.
\end{equation*}
We claim that
\begin{equation*}
	\frac{d \, \mu}{d \, \h^{N-1} \llcorner S_u}(x) \geq  2(\beta - \alpha)^2 \omega_{n-1} k
		 \quad \text{for } \h^{N-1} \text{-a.e. } x \in S_u \,,
\end{equation*}
which will immediately imply our assertion.
It remains to show the claim. Let us use the ad-hoc notation $Q_R^\nu(x)$ for a cube obtained by rotating
$Q_R(x)$ in such a way that it has a face orthogonal to the vector $\nu \in \R^N$,
$Q_R^{\nu +}(x) := \left\{ y \in Q_R^\nu(x) \,:\, \nu \cdot (y-x) > 0 \right\}$
and $Q_R^{\nu -}(x) := \left\{ y \in Q_R^\nu(x) \,:\, \nu \cdot (y-x) < 0 \right\}$.
For $\h^{N-1}$-almost every $x \in S_u$ it holds that (see e.g. \cite[Theroems 2.22 and 3.59]{afp})
\begin{align*}
	&\text{it exists the measure theoretic outer normal } \nu = \nu_{S_u}(x) \,, \\
	&	\begin{array}{ll}
		\frac{1}{|Q_R^\nu (x)|}\int_{Q_R^\nu(x)} u - \left(\alpha \chi_{Q_R^{\nu +}(x)} + \beta
		 	\chi_{Q_R^{\nu -}(x)} \right) \,dy \rightarrow 0 & \text{as} \quad R \rightarrow 0^+ \,,\\[7pt]
		\frac{\h^{N-1} \llcorner S_u (Q_R^ \nu(x))}{R^{N-1}}  \rightarrow 1 & \text{as} 
			\quad R \rightarrow 0^+ \,,\\[5pt]
		\frac{\mu \left( Q_R^ \nu(x) \right)} {\h^{N-1} \llcorner S_u (Q_R^ \nu(x))}
 			\rightarrow \frac{d \, \mu}{d \, \h^{N-1} \llcorner S_u}(x) & \text{as} 
			\quad R \rightarrow 0^+ \,.
	\end{array}
\end{align*}
Moreover, there exists an at most countable set $\mathbf{B}(x)$
(the set of radii for which $\mu(\partial Q_R^\nu (x)) \not= 0$) such that
\begin{equation*}
	\begin{array}{lr}
		\lim_{\varepsilon \rightarrow 0^+} F_\varepsilon(u_\varepsilon, Q_R^ \nu(x)) = 
			\lim_{\varepsilon \rightarrow 0^+}\int_{Q_R^ \nu(x)} g_\varepsilon \,dy =
			\mu(Q_R^ \nu(x)) & \text{if } R \not\in \mathbf{B}(x) \text{ is sufficiently small} \,.
	\end{array}
\end{equation*}
Therefore, given any constant $\lambda > 0$, for $\h^{N-1}$-a.e. $x \in S_u$
there exists a cube $Q_R^\nu (x) \subset \Omega$ such that
\begin{align} \label{temp5}
	&\frac{d \, \mu}{d \, \h^{N-1} \llcorner S_u}(x) + \lambda \geq \frac{1}{R^{N-1}}
		\lim_{\varepsilon \rightarrow 0^+} F_\varepsilon(u_\varepsilon, Q_R^ \nu(x)) \,, \\
	& |Q_R^{\nu +}(x) \, \Delta \, \left(\{ u = \alpha\}  \cap Q_R^{\nu +}(x)\right) |
		< \frac{\lambda^2}{4} R^N \,, \nonumber\\
	& |Q_R^{\nu -}(x) \, \Delta \, \left(\{ u = \beta \}  \cap Q_R^{\nu -}(x)\right)| < \frac{\lambda^2}{4} R^N
		 \nonumber \,,
\end{align}
and therefore for all $\varepsilon$ sufficiently small,
\begin{align*}
	& |Q_R^{\nu +}(x) \cap A_\varepsilon| > (1 - \lambda^2) \frac{R^N}{2} \,, \\
	& |Q_R^{\nu -}(x) \cap B_\varepsilon| > (1 - \lambda^2) \frac{R^N}{2} \,.
\end{align*}
Since $F_\varepsilon(u_\varepsilon)$ is uniformly bounded in $\varepsilon$, we have
for a positive constant $C$ that $|Q_R^\nu(x) \setminus (A_\varepsilon \cup B_\varepsilon)| 
< C/\lambda_\varepsilon$ and in particular there exists a constant $c > 0$
such that
\begin{equation*}
	|Q_R^\nu(x) \setminus (A_\varepsilon \cup B_\varepsilon)| <
		\frac{c(1-3 \lambda -6 \varepsilon)}{2 \lambda_\varepsilon}.
\end{equation*} 
By the invariance of the energy $F_\varepsilon$ under rotation and translation of the domain, we may
from now on assume that $\nu = e_N$ and $x = 0$.
We can now apply Corollary \ref{correction_cylinderbound2} for $L := R$ and $r= R/2$, which implies
\begin{align} \label{temp6}
	&\varepsilon G \left( A,B,Q_R(0) \right) \nonumber\\
	&\quad \geq \varepsilon R^{N-1}\omega_{n-1} (1 - 3\lambda  - 6\varepsilon)
		(1 - 2\xi(N-1)(2\lambda + c\varepsilon) - 2\varepsilon) \nonumber \\
	&\quad\quad\left( \ln\left( \frac{\varepsilon R}{16}\right) - \ln \left( \frac{1}{2 \varepsilon
		\lambda_\varepsilon} \right) \right)  - \varepsilon R^{N-1} C(N) \left( 1
		- \ln\left( \frac{\varepsilon R}{4}\right)  \right) \nonumber\\
	&\quad \longrightarrow R^{N-1}\omega_{n-1} (1 - 3\lambda)
		(1 - 4\xi(N-1)\lambda)k \quad \text{as } \varepsilon \rightarrow 0^+ \,,
\end{align}
where we used that $\varepsilon \ln(\lambda_\varepsilon) \rightarrow k$ as $\varepsilon \rightarrow 0^+$.
Combining (\ref{temp5}) and (\ref{temp6}) we estimate
\begin{align*}
	\frac{d \, \mu}{d \, \h^{N-1} \llcorner S_u}(0) + \lambda &\geq \frac{1}{R^{N-1}}
		\lim_{\varepsilon \rightarrow 0^+} F_\varepsilon(u_\varepsilon, Q_R(0)) \\
	&\geq \frac{1}{R^{N-1}} \liminf_{\varepsilon \rightarrow 0^+} \, \varepsilon
		\int_{(A_\varepsilon \times B_\varepsilon \cup B_\varepsilon \times A_\varepsilon) 
		\cap Q_R(0) \times Q_R(0)}
		\frac{|u_\varepsilon(y) - u_\varepsilon(x)|^2}{|y-x|^{N+1}} \,d(y,x) \\
	&\geq \frac{1}{R^{N-1}} \liminf_{\varepsilon \rightarrow 0^+} \, \varepsilon \, 
		2 (\beta - \alpha - 2 \delta)^2 G (A_\varepsilon, B_\varepsilon, Q_R(0)) \\
	& \geq 2 (\beta - \alpha - 2 \delta)^2 \omega_{n-1} (1 - 3\lambda) (1 - 4\xi(N-1)\lambda)k \,.
\end{align*}
Because $\lambda$ and $\delta$ are arbitray, this implies the claim.
\end{proof}

Let us use the notation from \cite[Section 5]{ab} and say that a set $P \subset \R^N$ is a \emph{polyhedral
set} if it is an open set whith boundary $\partial P$ a Lipschitz manifold which is contained in the union of
finitely many affine hyperplanes, its \emph{faces} to be the intersecions of $\partial P$ with these
hyperplanes and its  \emph{edges} to be the set of points of $\partial P$ belonging to at least two such
hyperplanes.

\begin{prop}[Recovery sequence] \label{recovery}
Given a sequence $(\varepsilon_h)_h$ such that $\varepsilon_h \in \R$ satisfy
$\varepsilon_h \rightarrow 0^+$ and a function $u \in \BV(\Omega, \{ \alpha,\beta \})$, there exists
a sequence $(u_{\varepsilon_h})_h$ which converges to $u$ in $L^1(\Omega)$
as $h \rightarrow +\infty$, and satisfies
\begin{equation*}
	\limsup_{h \rightarrow \infty} F_{\varepsilon_h}(u_{\varepsilon_h}) \leq F(u) \,.
\end{equation*}
\end{prop}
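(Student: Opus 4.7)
The argument combines a polyhedral approximation with a localized construction based on Lemma \ref{recoverycube}. By the standard approximation of sets of finite perimeter by polyhedra (see e.g.\ \cite[Theorem 3.42]{afp}) together with the continuity of $F$ along such approximations, a diagonal extraction reduces the problem to the case $u = \alpha\chi_{P^c} + \beta\chi_P$ for a polyhedral set $P\subset\Omega$; in that case $F(u) = 2(\beta-\alpha)^2\omega_{n-1}k\,\h^{N-1}(\partial P\cap \Omega)$.

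For such $u$, let $F_1,\dots,F_m$ be the faces of $\partial P\cap\Omega$ with unit normals $\nu_1,\dots,\nu_m$ and let $E$ denote the union of edges. Fix $\eta>0$ small and set $F_j^\eta := \{x\in F_j : \mathrm{dist}(x, E\cup\partial\Omega)>\eta\}$, so that $\sum_j \h^{N-1}(F_j\setminus F_j^\eta)\leq C(P)\eta$. Cover each $F_j^\eta$ up to an $\h^{N-1}$-residual of measure $\eta$ by a finite pairwise disjoint family of oriented $(N-1)$-cubes $D_{j,k}\subset F_j$ of side $r_{j,k}\in(\eta^2,\eta)$, and let $C_{j,k}$ denote the associated $N$-cubes of height $\eta$ obtained by thickening $D_{j,k}$ along $\nu_j$; for $\eta$ small enough they are pairwise disjoint and contained in $\Omega$. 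Define $u_\varepsilon$ on each $C_{j,k}$ by the one-dimensional transition profile of Lemma \ref{recoverycube} perpendicular to $F_j$, and set $u_\varepsilon := u$ on $\Omega\setminus \bigcup_{j,k} C_{j,k}$.

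The convergence $u_\varepsilon \to u$ in $L^1$ is immediate because $u_\varepsilon\neq u$ only on transition slabs of total measure $O(\varepsilon/\lambda_\varepsilon)$, and the double-well term is bounded by $\lambda_\varepsilon\cdot\max W\cdot O(\varepsilon/\lambda_\varepsilon) = O(\varepsilon)\to 0$. For the nonlocal part, decompose the double integration domain according to which cube (if any) contains $x$ and $y$. Pairs lying in cubes of the same face $F_j$ are treated collectively: after rotating so $\nu_j=e_N$, outside the transition slab $u_\varepsilon=\alpha$ on $\{x_N>0\}$ and $u_\varepsilon=\beta$ on $\{x_N<0\}$, so their contribution is bounded (up to a negligible slab term) by
$$
2\varepsilon(\beta-\alpha)^2\,G\bigl(\bigcup_k D_{j,k}\times(\tfrac{\varepsilon}{2\lambda_\varepsilon},\tfrac{\eta}{2}),\,\R^{N-1}\times(-\tfrac{\eta}{2},-\tfrac{\varepsilon}{2\lambda_\varepsilon})\bigr).
$$
Applying Lemma \ref{cylinderplane} to each disjoint $D_{j,k}$ and using $\varepsilon\log\lambda_\varepsilon\to k$, this is at most $2(\beta-\alpha)^2\omega_{n-1}k\,\h^{N-1}(F_j^\eta)+o_\varepsilon(1)$. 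Summing over $j$ yields $F(u)+C\eta+o_\varepsilon(1)$, and a diagonal argument in $\varepsilon$ and $\eta$ closes the estimate.

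The main obstacle is controlling the remaining cross pairs: $(x,y)$ with one point inside a cube and the other outside, in cubes on distinct faces, or both in the $\eta$-tubular neighborhood of $E$ (where $u_\varepsilon=u$ still jumps along $\partial P$). To eliminate the jumps of $u_\varepsilon$ near $E$, one further regularizes $u_\varepsilon$ in a thin tubular neighborhood of $E$ via an auxiliary 1D profile along any fixed direction; the extra contribution is bounded by the area of this tubular neighborhood times $2(\beta-\alpha)^2\omega_{n-1}k$, hence $O(\eta)\to 0$. The remaining genuinely long-range cross-face interactions are then handled using the decay $|y-x|^{-(N+1)}$ of the kernel: outside the cubes $u_\varepsilon\in\{\alpha,\beta\}$ and points on opposite sides of the interface are separated by at least a constant times $\eta$, so after multiplying by $\varepsilon$ these terms are $o_\varepsilon(1)$.
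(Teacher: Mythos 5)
Your overall architecture (polyhedral reduction, covering the faces by oriented cubes, the one-dimensional profile of width $\varepsilon/\lambda_\varepsilon$, Lemma \ref{cylinderplane} for the leading term) matches the paper's, and your leading-order computation is correct. The fatal problem is the gluing: you set $u_\varepsilon := u$ on $\Omega \setminus \bigcup_{j,k} C_{j,k}$ and treat all resulting ``cross pairs'' as lower-order terms to be absorbed by kernel decay or by $O(\eta)$ bookkeeping. They are not lower order: for this kernel, \emph{any} jump of $u_\varepsilon$ of fixed size across a piece of hypersurface of positive $\h^{N-1}$-measure makes the Gagliardo term $+\infty$ (this is Lemma \ref{cylinderplane} as $d \to 0^+$, and it is the whole reason the exponential scaling of $\lambda_\varepsilon$ is needed in the first place). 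Your $u_\varepsilon$ has such jumps in at least three places: (i) across the residual of each $F_j^\eta$ not covered by the cubes $D_{j,k}$ and across $F_j\setminus F_j^\eta$, where $u_\varepsilon=u$ still jumps from $\alpha$ to $\beta$; (ii) across the lateral boundary of $\bigcup_k C_{j,k}$ inside the transition slab $\{|x_N|<\varepsilon/(2\lambda_\varepsilon)\}$, where the profile takes intermediate values on the inside while $u_\varepsilon=u\in\{\alpha,\beta\}$ on the outside; (iii) at the boundary of the tubular neighbourhood of $E$ after your proposed regularization, since a 1D profile ``along any fixed direction'' cannot match continuously the two face profiles meeting at an angle, nor the values of $u$ on the outer boundary of the tube. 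Each of these forces $F_\varepsilon(u_\varepsilon)=+\infty$ for every fixed $\varepsilon$, so the $\limsup$ inequality fails outright. The decay argument you invoke applies only to pairs of sets at positive mutual distance, and the ``area of the tube times $2(\beta-\alpha)^2\omega_{n-1}k$'' bound presupposes exactly the continuous matching that is missing.

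This matching problem is precisely what the paper's proof is organized around: $\Omega$ is partitioned into polyhedral sets $A_1,\dots,A_n$, each either disjoint from $\partial P$ or meeting it in exactly one face with the projection property (\ref{temp9}); each $A_i$ is shrunk by $\varepsilon/(2\lambda_\varepsilon)$ to $A_{i,\varepsilon}$, and the thin gaps are filled by a \emph{continuous} piecewise affine interpolant $v_\varepsilon$ matching the boundary values, with gradient $O(\lambda_\varepsilon/\varepsilon)$ and non-constancy set of measure $O(\varepsilon^2/\lambda_\varepsilon^2)$; the interactions between the different pieces are then finite and are controlled by Lemma \ref{cylindercomplement} and Remark \ref{cylindercone}. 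To repair your argument you would have to replace ``$u_\varepsilon := u$ outside the cubes'' by such a globally continuous transition layer along all of $\partial P\cap\Omega$ and then verify the cross-term bounds for it; at that point you have essentially reproduced Step 1 of the paper.
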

\begin{proof}
In order to simplify the notation, we write $\varepsilon$ instead of $\varepsilon_h$.
\newline
\textbf{Step 1.}
Let us assume that $u$ is of the form
\begin{equation*}
	u = \left( \beta \chi_P+ \alpha \chi_{P^C} \right) |_\Omega \,, 
		\quad \text{where } P \subset \R^N \text{ is a polyhedral set and } \h^{N-1}(\partial \Omega \cap
		\partial P) = 0 \,.
\end{equation*}
We start by defining recovery sequences for restrictions $u|_A$ of $u$ to certain sets $A$:
If it holds that
\begin{equation}\label{temp8}
	A \text{ is a polyhedral set such that } \h^{N-1}(\partial A \cap \partial P) = 0 \,,
\end{equation}
we define
\begin{equation*}
	u_{\varepsilon, A} : A \cap \Omega \rightarrow \R \,,\, x \mapsto \left\{
		\begin{array}{lr}
			\beta, & A \subset P \\
			\alpha, &\text{otherwise}
		\end{array} \right. \,.
\end{equation*}
Then we have
\begin{equation*}
	\limsup_{\varepsilon \rightarrow 0^+} F_{\varepsilon}(u_\varepsilon,A \cap \Omega) = 0 
		\leq F(u,A) \,.
\end{equation*}
In addition $u_{\varepsilon, A}$ converges to $u|_A$ as $\varepsilon \rightarrow 0^+$.
If instead it holds that
\begin{align} \label{temp9}
	&A \text{ is a polyhedral set such that} A \cap \partial P \text{ is exactly one face } F \subset  \partial P
		\nonumber \\
	&\text{ and the orthogonal projection of } A \text{ to the affine hyperplane containing } 
		F \text{ equals } F \,,
\end{align}
we assume for notational convenience that $F \subset \{ x_N  = 0 \}$ and that the outer normal of
$P$ is $e_N$ on $F$ and set
\begin{equation*}
	u_{\varepsilon,A} : A \cap \Omega \rightarrow \R \,,\, x \mapsto \left\{
	\begin{array}{lr}
		\beta, &x_N \in \left(-\infty, - \frac{\varepsilon}{2 \lambda_\varepsilon} \right)\\
		\beta + (\alpha - \beta) \frac{\lambda_\varepsilon}{\varepsilon} 
			\left(x_N + \frac{\varepsilon}{2 \lambda_\varepsilon} \right), 
		& x_N \in \left( -\frac{\varepsilon}{2 \lambda_\varepsilon},
			\frac{\varepsilon}{2 \lambda_\varepsilon} \right) \\
		\alpha, & x_N \in \left( \frac{\varepsilon}{2 \lambda_\varepsilon}, +\infty \right)
	\end{array} \right. \,.
\end{equation*}
Given any $\lambda > 0$, we can find $R >0$ and finitely many pairwise disjoint
$R$-cubes $Q_i' \subset \R^{N-1}$ which cover $F \cap \Omega$ and satisfy
$\h^{N-1} \left( F \cap \Omega \Delta (\cup_i Q_i') \right) < \lambda$ and define
$u_\varepsilon: \cup_i Q_i' \times \R \rightarrow \R$ in the same way as $u_{\varepsilon,A}$.
From Lemma \ref{cylindercomplement} we have that  for all $i$ it holds
\begin{equation*}
	G \left( Q_i' \times (0,4/3), \cup_{j \not = i} Q_j' \times (-4/3, 0) \right) \leq C(R) \,,
\end{equation*}
and from the boundedness of $\Omega$ it folows
\begin{align*}
	&G \left( Q_i' \times (4/3,\infty), \, 
		\cup_{j \not = i} Q_j' \times (-\infty, 0), \, \Omega \right) \leq C \\
	&G \left( Q_i' \times (0,\infty), \,  \cup_{j \not = i} Q_j' \times (-\infty, -4/3) ,\, \Omega \right) \leq C \,.
\end{align*}
Therefore, writing $l/2$ for the diameter of $\Omega$ and using Lemma \ref{recoverycube}, we get
\begin{align*}
	\limsup_{\varepsilon \rightarrow 0^+} F_\varepsilon(u_\varepsilon, \Omega \cap \cup_i Q_i' \times \R) 
		&\leq \sum_i \limsup_{\varepsilon \rightarrow 0^+}
		F_\varepsilon \left( u_\varepsilon, \Omega \cap Q_i' \times \R \right) \\
	&\leq \sum_i \limsup_{\varepsilon \rightarrow 0^+}
		F_\varepsilon \left( u_\varepsilon, Q_i' \times (-l/2,l/2) \right) \\
	&\leq \sum_i 2(\beta -\alpha)^2 \omega_{n-1} k R^{N-1} \\
	&\leq 2(\beta -\alpha)^2 \omega_{n-1} k \left( \h^{N-1}(F) + \lambda \right) \,,
\end{align*}
and we deduce by the arbitraryness of $\lambda$
and the assumption (\ref{temp9}) on $A$
\begin{equation*}
	\limsup_{\varepsilon \rightarrow 0^+} F_\varepsilon(u_{\varepsilon,A}, A \cap \Omega)
		\leq 2(\beta -\alpha)^2 \omega_{n-1} k \h^{N-1}(F) = F(u,A) \,.
\end{equation*}
Note also that $u_{\varepsilon, A}$ converges to $u|_A$ as $\varepsilon \rightarrow 0^+$.
Let us now assume that there are given two disjoint sets $A_1,A_2$, such that both $A_1$ and $A_2$
satisfy (\ref{temp8}) or (\ref{temp9}) and let us set $u_{\varepsilon,A_1},u_{\varepsilon,A_2}$
for the corresponding functions as introduced above. 
We define
\begin{align*}
	&A_{1,\varepsilon} := \{ x \in A_1 \,:\, d(x,\partial A_1) > \varepsilon/(2 \lambda_\varepsilon) \} \,, \\
	&A_{2,\varepsilon} := \{ x \in A_2 \,:\, d(x,\partial A_2) > \varepsilon/(2 \lambda_\varepsilon) \} \,,
\end{align*}
and set
\begin{equation*}
	A_\varepsilon := A_{1,\varepsilon} \cup A_{2,\varepsilon}
		\quad \text{and} \quad \tilde u_\varepsilon: A_\varepsilon \cap \Omega \rightarrow \R, x \mapsto \left\{
	\begin{array}{lr}
			u_{\varepsilon,A_1}(x), & x \in A_{1,\varepsilon} \\
			u_{\varepsilon,A_2}(x), &x \in A_{2,\varepsilon}
		\end{array} \right. \,.
\end{equation*}
Using Remark \ref{cylindercone} we have that
\begin{align*}
	&G \left(A_{1,\varepsilon} \cap \{\tilde u_\varepsilon = \alpha\} , 
		A_{2,\varepsilon} \cap \{\tilde u_\varepsilon > \alpha\} , \Omega \right) < C \,, \\
	&G \left(  A_{1,\varepsilon} \cap \{\tilde u_\varepsilon = \beta\} ,
		 A_{2,\varepsilon} \cap \{\tilde u_\varepsilon < \beta\}, \Omega \right) < C \,,
\end{align*}
by the very definition of $G$ it is
\begin{align*}
	&G \left(  A_{1,\varepsilon} \cap \{\tilde u_\varepsilon = \alpha\} ,
		  A_{2,\varepsilon} \cap \{\tilde u_\varepsilon = \alpha\}, \Omega \right) = 0 \,, \\
	&G \left( A_{1,\varepsilon} \cap \{\tilde u_\varepsilon = \beta\} , 
		 A_{2,\varepsilon} \cap \{\tilde u_\varepsilon = \beta\}, \Omega \right) = 0 \,,
\end{align*}
and the same holds true with the roles of $A_{1,\varepsilon},A_{2,\varepsilon}$ interchanged.
Therefore we have that
\begin{equation*}
	\limsup_{\varepsilon \rightarrow 0^+} F_\varepsilon(\tilde u_\varepsilon, A_\varepsilon \cap \Omega) 
		\leq \limsup_{\varepsilon \rightarrow 0^+} F_\varepsilon
		(\tilde u_{\varepsilon,A_1}, A_{1,\varepsilon} \cap \Omega)
		+ \limsup_{\varepsilon \rightarrow 0^+} F_\varepsilon
		(\tilde u_{\varepsilon,A_2}, A_{2,\varepsilon} \cap \Omega)
		\leq F(u,A)  \,.
\end{equation*}
Because we can find a covering of $\Omega$ with finitely many pairwise disjoint sets $A_1 \dots, A_n$
as in (\ref{temp8})  or (\ref{temp9}),
we can repeat the procedure now using
$A_1, \dots, A_n$ instead of $A_1,A_2$ and find
functions $\tilde u_\varepsilon: \Omega \cap \left(A_{1,\varepsilon} \cup
\dots \cup A_{n,\varepsilon} \right)\rightarrow \R$ such that
\begin{equation*}
	\limsup_{\varepsilon \rightarrow 0^+} F_\varepsilon(\tilde u_\varepsilon) \leq F(u,\Omega) \,.
\end{equation*}
On the set $\Omega \setminus \left(A_{1,\varepsilon} \cup \dots \cup A_{n,\varepsilon} \right)$ we can find
a function $v_\varepsilon: \Omega \setminus \left(A_{1,\varepsilon} \cup \dots \cup A_{n,\varepsilon} \right)
\rightarrow \R$, which is piecewise affine, its restriction $v_\varepsilon|_{\partial A_{i,\varepsilon}}$
coincides with the restritcion $\tilde u_\varepsilon|_{\partial A_{i,\varepsilon}}$ for all $i$, the set where it is non-constant
has measure bounded by $C \frac{\varepsilon^2}{\lambda_\varepsilon^2}$  and its gradient is bounded by
$\tilde C \frac{\varepsilon}{\lambda_\varepsilon}$  (where $C$ and $\tilde C$ do not depend on
$\varepsilon$)
\footnote{This can be seen as follows: for any pair $(A_i, A_j)$ such that a face $F$ of $A_i$ is contained in a
face of $A_j$, we define for all points $x \in F$ which satisfy that
$B_{\varepsilon/ \lambda_\varepsilon}(x) \subset
A_i \cup A_j$ on the segment connecting the point
$(\partial A_{i,\varepsilon} \cap B_{\varepsilon/ (2\lambda_\varepsilon)}(x))$ and the point
$(\partial A_{j,\varepsilon} \cap B_{\varepsilon/ (2\lambda_\varepsilon)}(x))$
the function $v_\varepsilon$ to be the affine interpolation between 
$u_{\varepsilon}(\partial A_{i,\varepsilon} \cap B_{\varepsilon/ (2 \lambda_\varepsilon)}(x))$
and $u_{\varepsilon}(\partial A_{j,\varepsilon} \cap B_{\varepsilon/ (2 \lambda_\varepsilon)}(x))$.
This gives for any point in $\Omega \setminus \left(A_{1,\varepsilon} \cup \dots \cup A_{n,\varepsilon} \right)$
at most one value for $v_\varepsilon$. The remaining set where $v_\varepsilon$ needs to be defined
is contained in a $C \frac{\varepsilon}{\lambda_\varepsilon}$-neighbourhood
of the $(N-2)$-dimensional edges
of the sets $A_1, \dots A_n$ and therefore has Lebesgue measure bounded by
$C \frac{\varepsilon^2}{\lambda_\varepsilon^2}$. On this set,
we choose any  affine interpolation between the boundary values for $v_\varepsilon$ whose gradient on the
affine regions is bounded by $C \frac{\lambda_\varepsilon}{\varepsilon}$ and $C$ does not depend on
$\varepsilon$. Then $v_\varepsilon$ can be non-constant only on this set and on  an
$C \frac{\varepsilon}{\lambda_\varepsilon}$-neighbourhood of the 
$(N-2)$-dimensional edges of $P$ and therefore
on a set of measure bounded by $C \frac{\varepsilon^2}{\lambda_\varepsilon^2}$.} 
.
We can then use Remark \ref{cylindercone} to show that
\begin{align*}
	&G \left(A_{i,\varepsilon} \cap \{\tilde u_\varepsilon > \alpha\} , 
		\{v_\varepsilon = \alpha\} , \Omega \right) < C \,, \\
	&G \left(  A_{i,\varepsilon} \cap \{\tilde u_\varepsilon < \beta\} ,
		 \{v_\varepsilon = \beta\}, \Omega \right) < C
\end{align*}
and by the very definition of $G$ it holds
\begin{align*}
	&G \left(A_{i,\varepsilon} \cap \{\tilde u_\varepsilon = \alpha\} , 
		\{v_\varepsilon = \alpha\} , \Omega \right) = 0 \,, \\
	&G \left(  A_{i,\varepsilon} \cap \{\tilde u_\varepsilon = \beta\} ,
		 \{v_\varepsilon = \beta\}, \Omega \right) = 0\,.
\end{align*}
We define
\begin{equation*}
	u_\varepsilon: \Omega \rightarrow \R, x \mapsto \left\{
	\begin{array}{lr}
			\tilde u_\varepsilon(x), & x \in \cup_{i = 1, \dots ,n}A_{i,\varepsilon} \\
			v_\varepsilon(x), &x \in \Omega \setminus \cup_{i = 1, \dots ,n}A_{i,\varepsilon}
		\end{array} \right.
\end{equation*}
An estimate as in Lemma \ref{recoverycube} shows
\begin{equation*}
\limsup_{\varepsilon \rightarrow 0^+}  \int_{\left\{ \alpha < v_\varepsilon < \beta \right\} \times \Omega} 
		\frac{\left(u_\varepsilon(y) - u_\varepsilon(x) \right)^2}{|y-x|^{N+1}} \,d(y,x) = 0
\end{equation*}
and we obtain that 
\begin{equation*}
	\limsup_{\varepsilon \rightarrow 0^+} F_\varepsilon(u_\varepsilon) \leq F(u,\Omega) \,.
\end{equation*}
Since $u_\varepsilon$ converges to $u$ as $\varepsilon \rightarrow 0^+$,
this concludes the first step of the proof.
\newline
\textbf{Step 2.}
Let us assume that
\begin{equation*}
	u \in \BV \left( \Omega, \{ \alpha, \beta \} \right) \,.
\end{equation*}
Because sets of finite perimeter can be approximated in perimeter with polyhedral sets,
there exists a sequence $(u_h)_h$ of functions of the type from step 1 which approximate $u$
in $\BV(\Omega)$. In particular we have that $u_h$ approximates $u$ in $L^1(\Omega)$ and
$\limsup_{h \rightarrow \infty} F(u_h) \leq F(u)$. A diagonal argument concludes the proof.
\end{proof}

\end{document}